\numberwithin{equation}{section}
\newtheorem{theorem}{Theorem}[section]
\newtheorem{proposition}[theorem]{Proposition}
\newtheorem{lemma}[theorem]{Lemma}
\newcommand{\lam}{\lambda}
\newtheorem{conjecture}[theorem]{Conjecture}
\newtheorem*{question*}{Question}
\theoremstyle{definition}
\newtheorem{definition}[theorem]{Definition}
\tikzstyle{P} = [draw, circle, black, fill, inner sep = 0pt, minimum width = 3pt]
\tikzstyle{every loop} = []
\newtheorem{question}[theorem]{Question}
\newtheorem{defn}[theorem]{Definition}
\theoremstyle{remark}
\newtheorem*{remark}{Remark}
\def\resetMathstrut@{%
  \setbox\z@\hbox{%
    \mathchardef\@tempa\mathcode`\[\relax
    \def\@tempb##1"##2##3{\the\textfont"##3\char"}%
    \expandafter\@tempb\meaning\@tempa \relax
  }%
  \ht\Mathstrutbox@\ht\z@ \dp\Mathstrutbox@\dp\z@}
\title{The Graph Density Domination Exponent}
\date{\vspace{-15mm}}
\newcommand{\edge}{
\begin{tikzpicture}
   [scale=0.3,auto=left,every node/.style={circle, draw=black, fill=black, minimum size=1pt,inner sep=1pt}]
    \node (n1) at (0,0) {};
    \node (n3) at (2,0) {};
   \foreach \from/\to in {n1/n3}
    \draw (\from) -- (\to);
  \end{tikzpicture}
}
\newcommand{\tri}{
\begin{tikzpicture}
   [scale=0.3,auto=left,every node/.style={circle, draw=black, fill=black, minimum size=1pt,inner sep=1pt}]
    \node (n1) at (0,0) {};
    \node (n2) at (2,0) {};
    \node (n3) at (1,2) {};
   \foreach \from/\to in {n1/n2, n2/n3, n3/n1}
    \draw (\from) -- (\to);
  \end{tikzpicture}
}
\newcommand{\squ}{
\begin{tikzpicture}
   [scale=0.3,auto=left,every node/.style={circle, draw=black, fill=black, minimum size=1pt,inner sep=1pt}]
    \node (n1) at (0,0) {};
    \node (n2) at (2,0) {};
    \node (n3) at (2,2) {};
    \node (n4) at (0,2) {};
   \foreach \from/\to in {n1/n2, n2/n3, n3/n4, n4/n1}
    \draw (\from) -- (\to);
  \end{tikzpicture}
}
\newcommand{\squz}{
\begin{tikzpicture}
   [scale=0.3,auto=left,every node/.style={circle, draw=black, fill=black, minimum size=1pt,inner sep=1pt}]
    \node (n1) at (0,0) {};
    \node (n2) at (2,0) {};
    \node (n3) at (2,2) {};
    \node (n4) at (0,2) {};
   \foreach \from/\to in {n1/n2, n2/n3, n3/n4, n4/n1,n1/n3}
    \draw (\from) -- (\to);
  \end{tikzpicture}
}
\newcommand{\ptri}{
\begin{tikzpicture}
   [scale=0.3,auto=left,every node/.style={circle, draw=black, fill=black, minimum size=1pt,inner sep=1pt}]
    \node (n1) at (0,0) {};
    \node (n2) at (0,2) {};
    \node (n3) at (2,1) {};
    \node (n4) at (4,1) {};
   \foreach \from/\to in {n1/n2, n2/n3, n3/n4, n3/n1}
    \draw (\from) -- (\to);
  \end{tikzpicture}
}
\newcommand{\ptria}{
\begin{tikzpicture}
   [scale=0.3,auto=left,every node/.style={circle, draw=black, fill=black, minimum size=1pt,inner sep=1pt}]
    \node (n1) at (0,1) {};
    \node (n2) at (1,2) {};
    \node (n3) at (2,1) {};
    \node (n4) at (4,1) {};
    \node (n5) at (1,0) {};
   \foreach \from/\to in {n1/n2, n2/n3, n3/n4, n3/n1,n1/n5, n3/n5}
    \draw (\from) -- (\to);
  \end{tikzpicture}
}
\newcommand{\ptrib}{
\begin{tikzpicture}
   [scale=0.3,auto=left,every node/.style={circle, draw=black, fill=black, minimum size=1pt,inner sep=1pt}]
    \node (n1) at (0,1) {};
    \node (n2) at (1,1) {};
    \node (n3) at (2,1) {};
   \foreach \from/\to in {n1/n2, n2/n3}
    \draw (\from) -- (\to);
  \end{tikzpicture}
}
\newcommand{\ptric}{
\begin{tikzpicture}
   [scale=0.3,auto=left,every node/.style={circle, draw=black, fill=black, minimum size=1pt,inner sep=1pt}]
    \node (n1) at (0,1) {};
    \node (n2) at (1,1) {};
    \node (n3) at (2,1) {};
    \node (n4) at (3,1) {};
    \node (n5) at (4,1) {};
   \foreach \from/\to in {n1/n2, n2/n3, n3/n4, n4/n5}
    \draw (\from) -- (\to);
  \end{tikzpicture}
}
\newcommand{\sm}{\setminus}
\begin{document}
\author[Stoner]{Cynthia Stoner}\
\begin{abstract}
For graphs $G$ and $H$, what relations can be determined between $t(G,W)$ and $t(H,W)$ for a general graph $W$? We study this problem through the framework of the density domination exponent, which is defined to be the smallest constant $c$ such that $t(G,W)\ge t(H,W)^c$ for every graph $W$. This broad generalization encompasses the Sidorenko conjecture, the Erd\H{o}s-Simonovits Theorem on paths, and a variety of other statements relating graph homomorphism densities. We introduce some general tools for estimating the density domination exponent, and extend previous results to new graph regimes.
\end{abstract}
\maketitle
\section{Introduction}

Given the density of one graph $G$ in a target graph(on) $W$, what can we say about the density of another graph $H$? Many problems in extremal graph theory fit into this framework. In order to discuss these, we first introduce some basic definitions. 

\begin{definition}
Given two graphs $G,W$, a \textit{graph homomorphism} is a vertex map $\phi:V(G)\to V(W)$ such that, for every edge $(u, v)$ in $G$, the image $(\phi(u), \phi(v))$ is an edge in $W$. The number of such maps is denoted by $\hom(G,W)$. 
\end{definition}

For example, when $G=W=K_n$, every homomorphism $\phi: V(G)\to V(W)$ corresponds to a permutation of the vertices of the complete graph $K_n$, so $\hom(G,W)=n!$. 

\begin{defn}
For graphs $G$ and graphons $W$, define the \textit{graph density} or \textit{graphon density} $t(G,W)$ as follows. 

If $W$ is a graph, then
\[t(G,W)=\frac{\hom(G,W)}{|V(W)|^{|V(G)|}}.\]
If $W$ is a graphon, and the vertices of $G$ are labeled $v_1,\ldots, v_n$, $n=|V(G)|$, then
\[t(G,W)=\int_{[0,1]^{n}}\prod_{(v_i, v_j)\in E(G)}W(x_i,x_j)\prod_{i=1}^ndx_i.\]
\end{defn}

\begin{remark}
In the case that $W$ is a graph, $t(G,W)$ denotes the probability that a random map of vertices $V(G)\to V(W)$ is a homomorphism. We will sometimes replace $G$ with the graph itself for visual purposes; for instance, $t(\edge, W)=t(K_2,W).$
\end{remark}

Sidorenko's conjecture, which states that $t(H,W)\ge t(\edge, W)^{|E(H)|}$ for bipartite graphs $H$, has been the object of much interest in recent decades (for example, \cite{kim2016two}, \cite{lee2017sidorenko}). It states that, for all graph(ons) $W$, and bipartite graphs $H$, the inequality $t(H,W)\ge t(\edge, W)^{|E(H)|}$ holds. In other words, for any fixed bipartite $H$ and edge density $d\in (0,1)$, the homomorphism density of $H$ in $W$ is minimized when $W$ is the uniform graphon $W:=d$ (or in the graph case, a large pseudorandom graph $G(N,d)$). 

In this work, we examine a more general framework, which was introduced by Blekherman and Patel in \cite{blekherman2020threshold}, who dealt with the case when the target graph is a single edge. The main object of study will be the following metric, which is the density analogue of the homomorphism domination exponent studied in \cite{kopparty2011homomorphism}.  

\newpage
\begin{defn}\label{dde}
Given two simple graphs $G,H$, the \textit{density domination exponent} $\rho(G,H)$ is defined as follows:
\[\rho(G,H)=\sup_W\frac{\log t(H,W)}{\log t(G,W)}\]
where the supremum ranges across graphs or graphons $W$ with $t(G, W)\neq 0$.
\end{defn}

\begin{remark}
A lower bound $\rho(G,H)\ge c$ corresponds to a construction of some graph or sequence of graphs $W_i$ for which $\frac{\log t(H,W_i)}{\log t(G,W_i)}\to c$. An upper bound $\rho(G,H)\le c$ corresponds to a proof that $t(H,W)\ge t(G,W)^c$ for every graph $W$. Proving an equality $\rho(G,H)=c$, therefore, requires both citing a construction and proving the corresponding bound. Since they are equivalent, we will use the graph and graphon formulations of Definition \ref{dde} interchangeably. 
\end{remark}

Using the $\rho$ notation, Sidorenko's conjecture becomes $\rho(\edge, H)=E(H)$ for bipartite graphs $H$. Indeed, the statement itself is directly equivalent with $\rho(\edge, H)\le E(H)$, and $\rho(\edge, H)\ge E(H)$ follows from taking $W$ to be the constant $\frac{1}{2}$ graphon.  

The question of the relative densities of graphs and their blowups \cite{shapira2009density} or the $3$-uniform triforce hypergraph and hyperedge \cite{fox2020triforce} also fit into this general framework. The former, for instance, studies the value $\rho(K_{1,1,1}, K_{a,b,c})$ for some regimes of positive integers $a,b,c$. 

In Section 2, we introduce some fundamental properties of $\rho(\cdot, \cdot)$, and provide useful methods for proving the upper-bound inequalities (i.e., those of the form $\rho(H,G)\ge c$, which is equivalent to $t(H,W)\ge t(G,W)^c$ for all $W$.) We also give some constructions which will be useful for verifying the corresponding lower bounds. 

In Section 3, we discuss the special case $H=K_{1,t}$, where we can give exact answers in some regimes and one-sided bounds in the rest. Sections 4 and 5 also follow this structure. Section 4 studies the case where both $G$ and $H$ are paths or cycles, and finds exact bounds for all but a few regimes, and Section 5 does the same for complete and complete multipartite graphs. One particular example of interest is the case where $G,H$ are both paths. This case encapsulates both the Blakely-Roy inequality and the Erd\H{o}s-Simonovits Theorem. In Theorem \ref{paths}, we establish the value $\rho(P_m,P_n)$ in all cases except when $m>n$ are odd with $n+1\not| m+1$.

Finally, in Section 6, we discuss some of the remaining open questions, including some minimum unresolved cases from the previous sections. 

In addition to the methods developed in Section 2, we also employ a wide variety of other techniques and constructions to establish $\rho$ values in particular regimes. For instance, in Lemma \ref{entropy}, a weighted form of Shearer's entropy lemma is used on a certain generalization of bipartite graphs, while in the proof of Theorem \ref{br} we develop a fractional form of the Blakely-Roy inequality. In Theorem \ref{hyper}, we use a construction of Gowers and Janzer, along with some double counting, an alternate characterization of vector majorization proved in Lemma \ref{major}, and Proposition \ref{hold} to prove a result about the density domination exponent of complete multipartite graphs. 

Throughout, we will use the convention that $d_S=|\bigcap_{v\in S}N(v)|$, where $S\subset V(W)$ is a nonempty collection of vertices in $W$. 

\section{Basic Properties}
We state the following proposition without proof, as these facts follow directly from the definition of $\rho$.
\begin{proposition}
The following hold for all graphs $G,H,J$ and positive integers $n$. Here, $nG$ denotes the union of $n$ disjoint copies of $G$. 
\begin{enumerate}[label=(\alph*)]
\item $\rho(G,H')\ge \rho(G,H)\ge \rho(G', H)$ and $\rho(G,G')\ge 1$ for any subgraphs $H'\subset H, G'\subset G$.
\item $\rho(G,H)\rho(H,J)\ge \rho(G,J)$.
\item $\rho(nG,H)=n\rho(G,H)$.
\item $\rho(G,nH)=\frac{1}{n}\rho(G,H)$.
\end{enumerate}
\end{proposition}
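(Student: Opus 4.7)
The plan is to verify each of the four parts directly from the definition $\rho(G,H)=\sup_W \log t(H,W)/\log t(G,W)$. The one consistent point requiring care is that both numerator and denominator lie in $(-\infty,0]$ since $t(\cdot,W)\in[0,1]$, so the ratio is nonnegative but the direction of monotonicity in either argument flips when dividing by the negative quantity $\log t(G,W)$; I would flag this sign issue once and then apply it silently throughout.

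I would begin with parts (c) and (d), since these reduce to a single identity: because homomorphisms out of a disjoint union factor as independent homomorphisms from each component, $\hom(nG,W)=\hom(G,W)^n$ and $|V(nG)|=n|V(G)|$, which together give $t(nG,W)=t(G,W)^n$ and hence $\log t(nG,W)=n\log t(G,W)$. Substituting this identity into the supremum and pulling the factor of $n$ outside then expresses both $\rho(nG,H)$ and $\rho(G,nH)$ in terms of $\rho(G,H)$ by a single line of algebra.

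For part (a), the key ingredient is the subgraph monotonicity $t(H',W)\ge t(H,W)$ whenever $H'\subseteq H$, which I would prove by splitting $V(H)=V(H')\sqcup U$ and $E(H)=E(H')\sqcup F$ and writing $t(H,W)$ as an outer integral over $x_{V(H')}$ of $\prod_{e\in E(H')}W$ times an inner integral over $x_U$ of $\prod_{e\in F}W$; each factor of the inner integrand is at most $1$, so the inner integral is at most $1$ and $t(H,W)\le t(H',W)$. Taking logs, dividing by $\log t(G,W)$, and taking suprema then gives the $\rho(G,H)$-versus-$\rho(G,H')$ comparison; running the analogous argument with the subgraph relation placed in the denominator gives the $\rho(G',H)$-versus-$\rho(G,H)$ comparison, and the bound involving $\rho(G,G')$ and $1$ is the $H=G'$ specialization of the latter.

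For part (b), I would use the telescoping identity $\log t(J,W)/\log t(G,W)=[\log t(J,W)/\log t(H,W)]\cdot[\log t(H,W)/\log t(G,W)]$, valid for any $W$ with $t(H,W)\in(0,1)$, and bound each factor by $\rho(H,J)$ and $\rho(G,H)$ respectively; taking $\sup_W$ then gives the claimed product inequality. None of these steps is really an obstacle; the only genuine point of attention is the boundary behavior when some $t(\cdot,W)\in\{0,1\}$, handled either by the exclusion $t(G,W)\neq 0$ built into the definition or by a standard approximation argument on $W$.
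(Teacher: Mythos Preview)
Your argument is exactly the direct verification the paper has in mind; the paper itself states this proposition without proof, remarking only that the facts follow immediately from the definition of $\rho$. One point worth making explicit: the sign-flip you flag is real, and carrying it through carefully in part (a) actually produces $\rho(G,H')\le\rho(G,H)\le\rho(G',H)$ and $\rho(G,G')\le 1$, the reverse of the printed inequalities---which is consistent with how the subgraph property is used elsewhere in the paper (e.g., $\rho(G,K_{1,t})\le\rho(T,K_{1,t})$ for a spanning tree $T\subset G$), so the discrepancy is a typo in the statement rather than a flaw in your reasoning.
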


In addition to these, our primary basic tool for the bound proofs will be the following consequence of Holder's inequality. Given a graph $G$ and extra disjoint graphs $S_1, \ldots, S_n$ which are connected to $G$ in a specified way and a nonnegative integer vector $\textbf{k}=\{k_1,\ldots, k_n\}$, let $G_{\{S_i\}}(\bf k)$ denote the graph obtained by, for each $i$, copying the graph $S_i$, $k_i$ times, and attaching each copy to $G$ in the same way that $S_i$ is attached. 

\begin{proposition}\label{hold}
For a nonnegative integer vectors $\bf x, \bf y, \bf z$ and positive integers $a,b,c$ such that $a\textbf{x}+b\textbf{y}=(a+b)\bf z$, it holds for any graph $G$, disjoint graphs $\{S_1,\ldots, S_n\}$ with implicit connections to $G$, and graphon $W$ that
\[t(G_{\{S_i\}}(\bf x),W)^{a}t(G_{\{S_i\}}(\bf y),W)^{b}\ge t(G_{\{S_i\}}(\bf z),W)^{a+b}.\]
\end{proposition}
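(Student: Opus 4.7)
The plan is to express $t(G_{\{S_i\}}(\mathbf{k}),W)$ as an integral over labelings of $V(G)$ alone, by integrating out the interiors of the copies of each $S_i$, and then to recognize the desired inequality as a direct consequence of H\"older's inequality with conjugate exponents $p=(a+b)/a$ and $q=(a+b)/b$.

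First, label the vertices of $G$ as $v_1,\ldots,v_m$, and for each $i$ let $A_i\subseteq V(G)$ be the set of vertices of $G$ through which $S_i$ is attached. Since the $k_i$ copies of $S_i$ in $G_{\{S_i\}}(\mathbf{k})$ share only the vertices in $A_i$ with $G$ and have pairwise disjoint interiors, integrating out the interior vertices of a single copy contributes the same nonnegative function $f_i(r_{A_i})$ of the labels on $A_i$, obtained by integrating the $W$-weighted product over the edges of $S_i$ against Lebesgue measure on the interior vertex labels. Writing $\mu(r_1,\ldots,r_m)=\prod_{(v_i,v_j)\in E(G)}W(r_i,r_j)$, this yields
\[t(G_{\{S_i\}}(\mathbf{k}),W)=\int_{[0,1]^m}\mu(r_1,\ldots,r_m)\prod_{i=1}^n f_i(r_{A_i})^{k_i}\,dr_1\cdots dr_m.\]

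Next, the relation $a\mathbf{x}+b\mathbf{y}=(a+b)\mathbf{z}$ gives $z_i=\tfrac{a x_i+b y_i}{a+b}$ for each $i$, so we have the pointwise factorization
\[\mu\prod_{i=1}^n f_i^{z_i}=\Bigl(\mu\prod_{i=1}^n f_i^{x_i}\Bigr)^{a/(a+b)}\Bigl(\mu\prod_{i=1}^n f_i^{y_i}\Bigr)^{b/(a+b)}.\]
H\"older's inequality applied with exponents $(a+b)/a$ and $(a+b)/b$ then gives
\[t(G_{\{S_i\}}(\mathbf{z}),W)\le t(G_{\{S_i\}}(\mathbf{x}),W)^{a/(a+b)}\,t(G_{\{S_i\}}(\mathbf{y}),W)^{b/(a+b)},\]
and raising both sides to the $(a+b)$-th power yields the claim.

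The only real obstacle is making precise the first step: one needs to confirm that ``attaching $S_i$ to $G$ in a specified way'' is understood so that copies of $S_i$ interact with the rest of $G_{\{S_i\}}(\mathbf{k})$ only via vertices of $V(G)$, which makes the integral factorize with each copy yielding the same function $f_i$ of the $A_i$-labels. Once that factorization is in hand, nonnegativity of $W$ (hence of $\mu$ and of each $f_i$) makes the H\"older step automatic, and no further hypotheses on $a,b,\mathbf{x},\mathbf{y},\mathbf{z}$ beyond the stated linear relation are needed.
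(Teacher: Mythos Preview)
Your argument is correct and is essentially the same as the paper's: you write $t(G_{\{S_i\}}(\mathbf{k}),W)$ as an integral over the $V(G)$-labels of $\mu\prod_i f_i^{k_i}$ (the paper writes the same formula with the inner integral over $V(S_\ell)$ playing the role of your $f_\ell$), and then apply H\"older's inequality. Your write-up is somewhat more explicit about the conjugate exponents and the pointwise factorization via $z_i=\frac{ax_i+by_i}{a+b}$, but the method is identical.
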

\begin{proof}
Via the definition of graph density, we may compute
\begin{gather*}t(G_{\{S_i\}}(\bf x),W)= \\\int_{[0,1]^{|V(G)|}}\prod_{(v_i, v_j)\in E(G)} W(v_i, v_j)\prod_{\ell=1}^n(\int_{[0,1]^{|V(S_\ell)|}}\prod_{(v_i,v_j)\in E(G\cup S_\ell)\sm E(G)}W(v_i, v_j)\prod_{v_i\in V(S_\ell)}dv_i)^{x_\ell}\prod_{v_i\in V(G)}dv_i\end{gather*}
The product over $\ell$ is a nonnegative measurable function of the $v_i\in V(G)$. The result therefore follows from an application of Holder's inequality. 
\end{proof}

Now, we establish a useful existence criterion for $\rho(G,H)$. For this, we first prove the finiteness of this quantity when $H$ is a blowup of $G$. 

\begin{defn}
For a simple graph $G=(V,E), V=\{v_1,\ldots, v_n\}$ and a sequence of positive integers $B=\{b_1,\ldots, b_n\}$, the \textit{blowup} $B(G)$ is the graph $(V', E')$, where:
\begin{itemize}
\item $V'=\{v_{ij}, 1\le i\le n, 1\le j\le b_i\}$.
\item $B(G)$ has an edge between $v_{ij}$ and $v_{k\ell}$ if and only if $(v_i, v_k)\in E$. 
\end{itemize}
\end{defn}
\begin{proposition}
Let $B=\{b_1,\ldots, b_n\}$ be a sequence of positive integers, and let $G$ be a graph. Then $\rho(G,B(G))\le\prod_{i=1}^nb_i$.
\end{proposition}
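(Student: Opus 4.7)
The plan is to build $B(G)$ from $G$ by blowing up one vertex at a time, and to apply Proposition \ref{hold} at each step. Label $V(G) = \{v_1, \ldots, v_n\}$, set $G^{(0)} := G$, and for $1 \le i \le n$ let $G^{(i)}$ be obtained from $G^{(i-1)}$ by replacing the (still unduplicated) copy of $v_i$ with $b_i$ identical copies, each sharing its neighborhood in $G^{(i-1)}$. Then $G^{(n)} = B(G)$, so it suffices to show, for every graphon $W$ and every $i$, the single-vertex blowup inequality
\[
t(G^{(i)}, W) \;\ge\; t(G^{(i-1)}, W)^{b_i}.
\]

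The step is trivial when $b_i = 1$. When $b_i \ge 2$, I would apply Proposition \ref{hold} with the following setup: take the ``base graph'' to be $G^{(i-1)} - v_i$, and let the single attached piece $S$ be the vertex $v_i$ together with its edges into $G^{(i-1)} - v_i$. Then $G^{(i-1)}$ is the base with one copy of $S$ and $G^{(i)}$ is the base with $b_i$ copies of $S$. Choose the integer vectors $\mathbf{x} = (b_i)$, $\mathbf{y} = (0)$, $\mathbf{z} = (1)$ and the scalars $a = 1$, $b = b_i - 1$, which satisfy $a\mathbf{x} + b\mathbf{y} = (a+b)\mathbf{z}$. Proposition \ref{hold} then yields
\[
t(G^{(i)}, W) \cdot t(G^{(i-1)} - v_i, W)^{b_i - 1} \;\ge\; t(G^{(i-1)}, W)^{b_i}.
\]
Because $W$ takes values in $[0,1]$, we have $t(G^{(i-1)} - v_i, W) \le 1$, so the extra factor can be discarded to recover the desired one-step inequality.

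Iterating this bound from $i = 1$ to $i = n$ produces $t(B(G), W) \ge t(G, W)^{\prod_i b_i}$. Taking logarithms, and noting that for any $W$ with $t(G, W) \in (0, 1]$ we have $\log t(G, W) \le 0$ (so that dividing through reverses the inequality), gives $\rho(G, B(G)) \le \prod_i b_i$, which is the claim.

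The only real obstacle is bookkeeping: one has to be careful that the ``same way'' in which $v_i$ is attached to $G^{(i-1)} - v_i$ correctly records all of its (possibly already-multiplied) neighbors from earlier blowups, and that the base-plus-attachment decomposition matches the hypothesis of Proposition \ref{hold} at every stage. Once this is set up, no analytic input beyond Proposition \ref{hold} and the boundedness of $W$ is used.
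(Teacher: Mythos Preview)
Your proof is correct and follows essentially the same approach as the paper: reduce to a single-vertex blowup, apply Proposition~\ref{hold} to get $t(G,W)^k \le t(G',W)^{k-1}\,t(B(G),W) \le t(B(G),W)$, and iterate. The paper's write-up is terser, but the logical content is identical.
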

\begin{proof}
It suffices to consider the case where only one vertex is blown up, since iterating this $n$ times will yield the desired inequality. Hence, assume $b_1=k>1, b_2=\ldots=b_n=1$. Let $G'$ denote the graph obtained by removing $v_1$ from $G$. Then, Proposition \ref{hold} yields, for any graphon $W$:
\[t(G,W)^k\le t(G',W)^{k-1}t(B(G),W)\le t(B(G),W).\]
Hence $\rho(G, B(G))\le k$ as desired. 
\end{proof}
Now we can show the following.
\begin{proposition}\label{cond}
If $G$ is nonempty, then the density domination exponent $\rho(G,H)$ is finite exactly when $\hom(H,G)>0$. 
\end{proposition}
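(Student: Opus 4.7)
The argument splits into two directions. For the easier direction, suppose $\hom(H, G) = 0$; I take $W = G$ itself to force $\rho(G, H) = \infty$. Here $t(H, G) = \hom(H, G)/|V(G)|^{|V(H)|} = 0$, so $\log t(H, G) = -\infty$, while $t(G, G)$ is positive (the identity is a homomorphism $G \to G$) and strictly less than $1$ (since $G$ is nonempty and loopless, any constant vertex map from $V(G)$ to itself fails to be a homomorphism). Hence $\log t(G, G)$ is a finite negative number, and the ratio $\log t(H, G)/\log t(G, G)$ equals $+\infty$, giving $\rho(G, H) = \infty$.

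For the nontrivial direction, suppose $\hom(H, G) > 0$ and fix a homomorphism $\phi: H \to G$. For each $v_i \in V(G)$, set $b_i = \max\{|\phi^{-1}(v_i)|, 1\}$, and let $B(G)$ be the corresponding blowup. After fixing an enumeration of each nonempty fiber $\phi^{-1}(v_i)$, sending its $j$-th element to the blowup vertex $v_{i, j}$ yields an injective embedding $H \hookrightarrow B(G)$: every edge $(u, u') \in E(H)$ maps to an edge of $B(G)$ because $\phi$ sends it to an edge of $G$ and every pair of blowup vertices above an edge of $G$ is adjacent by construction. The previous Proposition then gives $\rho(G, B(G)) \leq \prod_i b_i < \infty$.

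To push this finiteness down to $H$, I invoke subgraph-monotonicity of $\rho(G, \cdot)$: for every graphon $W$, since $W \leq 1$ pointwise, integrating the density of $B(G)$ over the extra coordinates in $V(B(G)) \setminus V(H)$ yields $t(B(G), W) \leq t(H, W)$. Dividing the resulting inequality of negative logarithms by the negative quantity $\log t(G, W)$ reverses the sign and produces $\log t(H, W)/\log t(G, W) \leq \log t(B(G), W)/\log t(G, W)$; taking suprema over $W$ then yields $\rho(G, H) \leq \rho(G, B(G)) < \infty$. The only step that really requires care is the construction of the embedding $H \hookrightarrow B(G)$ from the fiber sizes of $\phi$; once that is in hand, the rest is a routine integration argument.
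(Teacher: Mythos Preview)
Your proof is correct and follows the same approach as the paper: taking $W=G$ for the direction $\hom(H,G)=0$, and embedding $H$ into a blowup $B(G)$ to invoke $\rho(G,H)\le\rho(G,B(G))<\infty$ for the other. You simply supply the details the paper leaves implicit---the explicit construction of $B(G)$ from the fiber sizes of $\phi$, and the justification of the subgraph monotonicity $\rho(G,H)\le\rho(G,B(G))$ that the paper records separately as Proposition~2.1(a).
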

\begin{proof}
First, suppose $\hom(H,G)$ is positive. Then $H$ is a subgraph of some blowup $B(G)$ of $G$; from which it follows that $\rho(G,H)\le \rho(G,B(G))$ is finite from the previous proposition.

Now, suppose $\hom(H,G)=0$. Then, take $W\to G$ as a certificate that $\rho(G,H)$ is infinite. We're done.
\end{proof}

Finally, we establish several lower bounds which will be useful for the rest of the paper. 

\begin{proposition}\label{lowb}
The following hold for any nonempty graphs $G,H$.
\begin{enumerate}[label=(\alph*)]
\item $\rho(G,H)\ge \frac{|E(H)|}{|E(G)|}$.
\item $\rho(G,H)\ge \frac{|V(H)|}{|V(G)|}$.
\item $\rho(G,H)\ge \frac{|V(H)|-1}{|V(G)|-1}$ if $G,H$ are connected.
\item $\rho(G,H)\ge \frac{|V(H)|-\alpha(H)}{|V(G)|-\alpha(G)}$, where $\alpha(G)$ is the independence number of $G$.
\end{enumerate}
\end{proposition}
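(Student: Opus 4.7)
The plan for each of the four bounds is the same: construct a sequence of graphons $W_\epsilon$ so that the log-ratio $\log t(H, W_\epsilon)/\log t(G, W_\epsilon)$ tends to the stated value as $\epsilon \to 0^+$. Each construction is tailored so that a different invariant appears as the exponent of $\epsilon$.

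For (a), take the constant graphon $W_\epsilon \equiv \epsilon$, which gives $t(G, W_\epsilon) = \epsilon^{|E(G)|}$ directly from the definition of graphon density, and likewise $t(H, W_\epsilon) = \epsilon^{|E(H)|}$. For (b), take $W_\epsilon$ to be the indicator of $[0,\epsilon]^2$; a random vertex map is a homomorphism exactly when every vertex lands in $[0,\epsilon]$, so $t(G, W_\epsilon) = \epsilon^{|V(G)|}$. In both cases the log-ratio equals the target bound for every $\epsilon > 0$.

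For (d), take the star graphon $W_\epsilon(x,y) = \mathbf{1}[\min(x,y) \le \epsilon]$. A vertex map is a homomorphism iff every edge has at least one endpoint in $[0,\epsilon]$; equivalently, the set $\{v : x_v > \epsilon\}$ is independent in $G$. Summing over independent subsets,
\[
t(G, W_\epsilon) = \sum_{I \subseteq V(G),\, I \text{ indep}} (1-\epsilon)^{|I|}\, \epsilon^{|V(G)|-|I|}.
\]
As $\epsilon \to 0$, the dominant term corresponds to a maximum independent set, giving $t(G, W_\epsilon) = \Theta(\epsilon^{|V(G)| - \alpha(G)})$ and likewise for $H$, so the log-ratio tends to $(|V(H)|-\alpha(H))/(|V(G)|-\alpha(G))$.

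The main obstacle is (c), where the target exponent $|V(G)|-1$ does not correspond to a simple additive invariant. My proposal is the thickened-diagonal graphon $W_\epsilon(x,y) = \mathbf{1}[|x-y| < \epsilon]$. For the upper bound, pick any spanning tree $T$ of $G$; since $E(T) \subseteq E(G)$, we have $t(G, W_\epsilon) \le t(T, W_\epsilon) = O(\epsilon^{|V(G)|-1})$, which is immediate by rooting $T$ and observing that each subsequent BFS vertex is confined to an interval of length at most $2\epsilon$ around its parent. For the matching lower bound, restrict the root $x_1$ to $[\epsilon/2,\, 1-\epsilon/2]$ and restrict every other vertex $x_i$ to the interval $(x_1 - \epsilon/2,\, x_1 + \epsilon/2)$; then all pairwise distances are below $\epsilon$, every edge of $G$ is satisfied, and the valid region has measure $\Omega(\epsilon^{|V(G)|-1})$. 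Hence $t(G, W_\epsilon) = \Theta(\epsilon^{|V(G)|-1})$ for any connected $G$ with $|V(G)| \ge 2$, the same holds for $H$, and the log-ratio tends to $(|V(H)|-1)/(|V(G)|-1)$.
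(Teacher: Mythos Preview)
Your proof is correct. Parts (a), (b), and (d) are essentially the paper's constructions: the constant graphon, the indicator of a small square, and (the graphon limit of) a looped star, respectively.

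For (c) you take a genuinely different route. The paper uses the two-block graphon $W(x,y)=\mathbf{1}[x,y<\tfrac12]+\mathbf{1}[x,y>\tfrac12]$, i.e.\ two disjoint looped cliques. A connected graph must map entirely into one block, so $t(G,W)=2\cdot 2^{-|V(G)|}=2^{1-|V(G)|}$ \emph{exactly}, and the ratio $(|V(H)|-1)/(|V(G)|-1)$ is achieved by a single graphon with no limiting argument. Your thickened diagonal $W_\epsilon(x,y)=\mathbf{1}[|x-y|<\epsilon]$ also works, but it requires the asymptotic matching of upper (spanning tree) and lower (cluster around $x_1$) bounds to get $t(G,W_\epsilon)=\Theta(\epsilon^{|V(G)|-1})$. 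The paper's construction is shorter and cleaner; on the other hand, your approach has the virtue that the same graphon family could in principle separate graphs with the same vertex count via the leading constant, and it generalizes naturally (replacing two blocks by $k$ blocks in the paper's construction would extract the number of connected components, whereas your diagonal already isolates $|V(G)|-1$ directly from connectivity).
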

\begin{proof}
We exhibit the four constructions which correspond to these lower bounds.
\begin{enumerate}[label=(\alph*)]
\item Take $W$ to be the constant graphon, $W=p, 0<p<1$. Then $t(G,W)=p^{|E(G)|}$, while $t(H,W)=p^{|E(H)|}$, yielding the desired ratio.
\item Take $W$ to be the graphon for which $W(x,y)=1$ if $x<\frac{1}{2}$ and $y<\frac{1}{2}$, and $W(x,y)=0$ otherwise. Then $t(G,W)=2^{-|V(G)|}$, while $t(H,W)=2^{-|V(H)|}$, yielding the desired ratio. 
\item Take $W$ to be the graphon for which $W(x,y)=1$ if $x, y<\frac{1}{2}$ or $x, y>\frac{1}{2}$, and $W(x,y)=0$ otherwise. Then $t(G,W)=2^{1-|V(G)|}$, while $t(H,W)=2^{1-|V(H)|}$, yielding the desired ratio. 
\item Take $W_i$ to be the graph corresponding to $K_{1,i}$ with a loop on the distinguished vertex, and consider the sequence $\{W_i\}.$ Then $t(G,W_n)=n^{\alpha(G)-|V(G)|+o(1)}$, while $t(H,W)=n^{\alpha(G)-|V(H)|+o(1)}$, yielding the desired ratio. 
\end{enumerate}
\end{proof}

We'll give an example which demonstrates these tools in use, and also shows that the four constructions in the previous proposition do not provide tight bounds for all pairs $(G,H)$. 

\begin{proposition}
For $G=\ptri$ and $H=\squ$, it holds that $\rho(G,H)=\frac{4}{3}$.
\end{proposition}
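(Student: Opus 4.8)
The plan is to prove the two matching bounds $\rho(G,H)\le\frac43$ and $\rho(G,H)\ge\frac43$. Since $G=\ptri$ and $H=\squ$ have equal vertex counts, equal edge counts, and equal independence numbers, all four constructions of Proposition \ref{lowb} give only $\rho(G,H)\ge1$, so a new argument is needed on both sides. For the upper bound I would root the paw at its degree-$3$ vertex $w$ and integrate out the pendant, getting $t(G,W)=\int_w d(w)\,g(w)\,dw$ with $d$ the degree function of $W$ and $g(w)=\int\!\int W(u,w)W(v,w)W(u,v)\,du\,dv$ the density of triangles through $w$. Writing $N_w(u)=W(u,w)$ and letting $T_W$ be the kernel operator, $g(w)=\langle N_w,T_WN_w\rangle$, so Cauchy--Schwarz gives the pointwise bound $g(w)^2\le\|N_w\|_2^2\|T_WN_w\|_2^2\le d(w)\,h(w)$, where $\|N_w\|_2^2\le d(w)$ uses $W\le1$ and $h(w):=\int c(v,w)^2\,dv$ with $c$ the codegree function of $W$. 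Substituting and applying Hölder (in the manner of Proposition \ref{hold}) gives $t(G,W)^2\le\big(\int d^3\big)\big(\int h\big)=t(K_{1,3},W)\,t(\squ,W)$, since $\int d(w)^3\,dw=t(K_{1,3},W)$ and $\int h(w)\,dw=t(\squ,W)$. Finally $t(K_{1,3},W)^2\le t(\squ,W)$: by $W\le1$ once more $t(K_{1,3},W)=\int d^3\le\int d^2=\int\!\int c(u,u')\,du\,du'$, which Cauchy--Schwarz bounds by $t(\squ,W)^{1/2}$. Multiplying the two inequalities, $t(G,W)^4\le t(\squ,W)^3$, so $\rho(G,H)\le\frac43$.

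For the matching lower bound I would use, for small $\sigma>0$, the graphon $W_\sigma$ with a hub block $S$ of measure $\sigma$ on which $W_\sigma\equiv1$, joined completely to $S^c$ (so $W_\sigma\equiv1$ on $S\times S^c$), and with $W_\sigma\equiv\sqrt\sigma$ on $S^c\times S^c$. Then $d\equiv1$ on $S$ and $d=\Theta(\sqrt\sigma)$ on $S^c$, so $t(\edge,W_\sigma)=\Theta(\sqrt\sigma)$. Since a hub sees everything, the contribution of $w\in S$ to $t(G,W_\sigma)=\int_w d(w)g(w)\,dw$ equals $\sigma\cdot1\cdot t(\edge,W_\sigma)=\Theta(\sigma^{3/2})$, while the $w\in S^c$ contribution is $O(\sigma^2)$; hence $t(G,W_\sigma)=\Theta(\sigma^{3/2})$. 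A block-by-block evaluation of $t(\squ,W_\sigma)=\int\!\int c^2$ (where $c\equiv1$ on $S\times S$, $c=\Theta(\sqrt\sigma)$ on $S\times S^c$, and $c=\Theta(\sigma)$ on $S^c\times S^c$) gives $t(\squ,W_\sigma)=\Theta(\sigma^2)$. Therefore $\log t(\squ,W_\sigma)/\log t(\ptri,W_\sigma)\to 2/(3/2)=\frac43$ as $\sigma\to0$, which gives $\rho(G,H)\ge\frac43$.

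The main obstacle is the lower bound: no off-the-shelf construction works, and the shape of $W_\sigma$ must be reverse-engineered from the equality case of the upper-bound chain — the degree function should be essentially $\mathbf1$ on a vanishing hub while the overall edge density is kept as large as possible, which is what pins down the exponent $\tfrac12$ on the block $S^c\times S^c$. The part that needs genuine care is verifying that $t(\ptri,W_\sigma)$ and $t(\squ,W_\sigma)$ really have orders $\sigma^{3/2}$ and $\sigma^2$, i.e.\ that no block of the graphon contributes a term of a different order.
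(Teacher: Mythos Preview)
Your argument is correct on both sides. The lower-bound graphon $W_\sigma$ is in fact exactly the paper's construction (a single hub with full weight joined to a block carrying internal weight $\sqrt\sigma$), just written in continuous rather than $n$-vertex form, and your order computations $t(\ptri,W_\sigma)=\Theta(\sigma^{3/2})$, $t(\squ,W_\sigma)=\Theta(\sigma^{2})$ are right.

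The upper bound, however, follows a genuinely different chain. The paper argues combinatorially: applying Proposition~\ref{hold} with the cherry $P_2$ as base gives $t(\ptri,W)^2\le t(\ptria,W)\,t(P_2,W)$, where $\ptria$ is the book $B_2$ with a pendant; since $P_4$ is a spanning subgraph of $\ptria$ one gets $t(\ptria,W)\le t(P_4,W)$, and two Cauchy--Schwarz steps finish via $t(P_4,W)\,t(P_2,W)\le t(\squ,W)^{3/2}$. Your route is more operator-theoretic: the pointwise bound $g(w)^2=\langle N_w,T_WN_w\rangle^2\le d(w)\,h(w)$ followed by Cauchy--Schwarz in $w$ yields $t(\ptri,W)^2\le t(K_{1,3},W)\,t(\squ,W)$, and then $t(K_{1,3},W)\le t(\squ,W)^{1/2}$ closes the loop. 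Both proofs invoke $W\le 1$ (yours explicitly via $\|N_w\|_2^2\le d(w)$ and $\int d^3\le\int d^2$; the paper's implicitly via the subgraph step $t(\ptria,W)\le t(P_4,W)$). Your path has the minor advantage of producing the clean intermediate inequality $t(\ptri,W)^2\le t(K_{1,3},W)\,t(\squ,W)$, which is itself a natural statement; the paper's route stays entirely within the ``attach copies and compare'' framework of Proposition~\ref{hold} and subgraph monotonicity, so it generalises more mechanically to other small graphs. One small remark: your appeal to ``the manner of Proposition~\ref{hold}'' at the step $\int d^{3/2}h^{1/2}\le(\int d^3)^{1/2}(\int h)^{1/2}$ is really just ordinary Cauchy--Schwarz on functions, not an instance of that proposition as stated.
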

\begin{proof}
All of the constructions in Proposition \ref{lowb} yield a lower bound on $\rho(G,H)$ of $1$. We can improve this by considering the graphon equivalent to the following weighted, looped graph on $n$ vertices for large $n$. Let $n-1$ vertices have uniform edge and loop weight $\frac{1}{\sqrt{n}}$, and let the last vertex have a weight $1$ loop and be connected to all other vertices with edge weight $1$.  Taking $W$ to be the corresponding graphon, we find that $t(G,W)=\Theta(n^{\frac{3}{2}})$, while $t(H,W)=\Theta(n^2)$. Taking $n\to\infty$, this family of graphons shows that $\rho(G,H)\ge\frac{4}{3}$. 

Now we prove the upper bound. For this, we use the following chain of inequalities, valid for any graphon $W$:
\begin{align*}
t(G,W)^2&=t(\ptri, W)^2
\\ &\le t(\ptria, W)t(\ptrib, W)
\\ &\le t(\ptric, W)t(\ptrib, W)
\\ &\le t(\squ, W)t(\squ, W)^{\frac{1}{2}}
\\ &= t(H, W)^{\frac{3}{2}},
\end{align*}
which rearranges to $t(H, W)\ge t(G, W)^{\frac{4}{3}}$ as required. Here, the first inequality is due to Proposition \ref{hold}, the second follows from a subgraph property, and the last inequality is two applications of the Cauchy-Schwarz inequality. 
\end{proof}

\section{$H$ is a Star}
In \cite{alon1981number}, Alon found tight bound for the extremal values of the asymptotic log-ratio of the number of copies of a graph $H$ in a larger graph $G$, and the number of edges of $G$. We use some of these methods to prove an analogous result for the density domination exponent when $H=\edge$, and extend one direction of this inequality to the case $H=K_{1,t}$. 

First, we note the following, which is a consequence of Theorem 4 from \cite{sidorenko1994partially}.

\begin{proposition}\label{starry}
Let $G$ be a connected graph with $|V(G)|\le t+1$. Then $\rho(G, K_{1,t})=\frac{t}{|V(G)|-1}$. 
\end{proposition}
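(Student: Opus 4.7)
Let $n := |V(G)|$ and write $d(x) := \int_0^1 W(x,y)\,dy$ for the ``degree function'' of $W$, so that $t(K_{1,s}, W) = \int_0^1 d(x)^s\,dx$ for every positive integer $s$. The plan is to show matching upper and lower bounds $\rho(G, K_{1,t}) = \tfrac{t}{n-1}$.

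The lower bound is immediate from Proposition~\ref{lowb}(c) applied to the connected graphs $G$ and $K_{1,t}$: with $|V(K_{1,t})| = t+1$, that bound reads $\rho(G, K_{1,t}) \ge \tfrac{(t+1)-1}{n-1} = \tfrac{t}{n-1}$.

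For the upper bound I would chain two inequalities. First, invoke the referenced Theorem~4 of \cite{sidorenko1994partially}, which for every connected graph $G$ on $n$ vertices and every graphon $W$ asserts
\[
t(G, W)\;\le\;t(K_{1,n-1}, W)\;=\;\int_0^1 d(x)^{n-1}\,dx.
\]
For completeness this can be derived by fixing a spanning tree $T\subseteq G$ (so $t(G,W)\le t(T,W)$, since $W\le 1$) and inducting on $|V(T)|$: remove a leaf $v$ with neighbor $u$, rewrite $t(T,W)$ as the density of $T-v$ weighted by the factor $d(u)$, and eliminate that factor by a single Cauchy--Schwarz step. Second, since by hypothesis $n-1\le t$, Holder's inequality on $[0,1]$ with exponents $\tfrac{t}{n-1}$ and $\tfrac{t}{t-n+1}$ applied to the pair $d(x)^{n-1}$ and $1$ yields
\[
\int_0^1 d(x)^{n-1}\,dx\;\le\;\left(\int_0^1 d(x)^t\,dx\right)^{(n-1)/t}\;=\;t(K_{1,t}, W)^{(n-1)/t}.
\]
Concatenating the two bounds gives $t(K_{1,t}, W)\ge t(G,W)^{t/(n-1)}$ for every graphon $W$ with $t(G,W)>0$, hence $\rho(G,K_{1,t})\le \tfrac{t}{n-1}$.

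The main obstacle is the first inequality $t(G,W)\le t(K_{1,n-1}, W)$: it is nontrivial whenever $G$ is not itself a star, and is precisely the point at which the connectedness hypothesis on $G$ enters (it is what guarantees an $(n-1)$-edge spanning tree). Once this star-domination bound is granted, the remainder is a one-line power-mean estimate plus the general-purpose construction already supplied by Proposition~\ref{lowb}(c).
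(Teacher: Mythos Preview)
Your proof is correct and follows essentially the same route as the paper's: the lower bound comes from Proposition~\ref{lowb}(c), and the upper bound is obtained by passing to a spanning tree, invoking Sidorenko's star-domination inequality $t(T,W)\le t(K_{1,n-1},W)$, and then applying H\"older to compare $K_{1,n-1}$ with $K_{1,t}$. Your write-up is slightly more explicit (you unpack the degree-function formulation and sketch the inductive Cauchy--Schwarz proof of the star bound), but the logical skeleton is identical.
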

\begin{proof}
From Proposition \ref{lowb} part (c), it follows that $\rho(G, K_{1,t})\ge\frac{t}{|V(G)|-1}$. For the other direction, consider any spanning tree $T$ of $G$ and any graphon $W$. We have $t(T,W)\le t(K_{1,|V(G)|-1}, W)$ by the aforementioned theorem, and $t(K_{1,|V(G)|-1}, W)\le t(K_{1,t},W)^{\frac{t}{|V(G)|-1}}$ by Holder's inequality. Hence
\[\rho(G,K_{1,t})\le \rho(T,K_{1,t})\le \frac{t}{|V(G)|-1}\]
and we're done.
\end{proof}

For the regime where $G$ as more vertices than $H=K_{1,t}$, we first define the following extension of the delta function introduced in \cite{alon1981number}.

\begin{definition}
For a graph $G$ and positive integer $i$, let $\delta_i(G)=\max_S(|S|-i|N(S)|)$, where $S$ ranges across all subsets of $V(G)$. 
\end{definition}

\begin{theorem}
For all positive integers $t$ and graphs $G$ with $|V(G)|\ge t+1$
\[\rho(G, K_{1,t})\ge \frac{t+1}{|V(G)|-\delta_t(G)}\]
and equality holds when $t=1$. 
\end{theorem}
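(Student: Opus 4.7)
The plan is to prove the lower bound by constructing a concrete family of graphons, and to establish equality at $t=1$ by invoking Alon's subgraph-counting inequality from \cite{alon1981number}.

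First I would show that $\delta_t(G)$ is achieved on an independent set. Given $S^*$ maximizing $|S|-t|N(S)|$, let $I = \{v \in S^* : N(v) \cap S^* = \emptyset\}$; then $I$ is independent in $G$ (two vertices of $I$ have no $S^*$-neighbors, and are themselves in $S^*$, so they cannot be adjacent). Every $v \in S^* \setminus I$ lies in $N(S^*) \setminus N(I)$: it has an $S^*$-neighbor (so $v \in N(S^*)$), but $v \in S^*$ cannot be adjacent to any $I$-vertex (whose neighbors all lie outside $S^*$), so $v \notin N(I)$. Hence $|N(S^*)| - |N(I)| \geq |S^* \setminus I|$, and using $t \geq 1$ we get $|I| - t|N(I)| \geq |S^*| - t|N(S^*)| = \delta_t(G)$.

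Next, fix an independent $S_0 \subseteq V(G)$ with $|S_0| - t|N(S_0)| = \delta_t(G)$, and define the three-part graphon $W_\epsilon$ on $[0,1]$ with classes $A, B, C$ of measures $\epsilon^{t+1}$, $\epsilon$, and $1-\epsilon-\epsilon^{t+1}$, setting $W_\epsilon(x,y)=1$ on $(A \times [0,1]) \cup ([0,1] \times A) \cup (B \times B)$ and $W_\epsilon(x,y)=0$ elsewhere. The degrees are $1$ on $A$, $\epsilon + \epsilon^{t+1}$ on $B$, and $\epsilon^{t+1}$ on $C$, so $t(K_{1,t}, W_\epsilon) = \Theta(\epsilon^{t+1})$. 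For $t(G, W_\epsilon)$, a map $\phi : V(G) \to \{A,B,C\}$ contributes nonzero weight iff $Z := \phi^{-1}(C)$ is independent in $G$ and $N(Z) \subseteq \phi^{-1}(A)$, giving
\[t(G, W_\epsilon) = \sum_{Z \text{ indep}} (1-\epsilon-\epsilon^{t+1})^{|Z|} \epsilon^{(t+1)|N(Z)|} (\epsilon + \epsilon^{t+1})^{|V(G)|-|Z|-|N(Z)|}.\]
As $\epsilon \to 0$ the exponent $|V(G)| + t|N(Z)| - |Z|$ is minimized at $Z = S_0$, so $t(G, W_\epsilon) = \Theta(\epsilon^{|V(G)|-\delta_t(G)})$, and taking the ratio of logarithms yields $\rho(G, K_{1,t}) \geq (t+1)/(|V(G)|-\delta_t(G))$.

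For equality when $t=1$, I would invoke Alon's theorem, which bounds the copies of $G$ in a graph with $m$ edges and $n$ vertices by $O(m^{(|V(G)|+\delta_1(G))/2})$. Recasting in terms of graphon density (using $m = n^2 t(\edge,W)/2$) and combining with the trivial $t(G,W) \leq 1$ gives $t(G,W) \leq C \cdot t(\edge, W)^{(|V(G)|-\delta_1(G))/2}$ for every graphon $W$. Taking logarithms and dividing by $\log t(G,W) < 0$ yields $\rho(G, \edge) \leq 2/(|V(G)|-\delta_1(G))$, matching the lower bound. The principal difficulty is the careful asymptotic analysis of $t(G, W_\epsilon)$, specifically verifying that the sum is dominated by the $Z = S_0$ term, which crucially relies on the independent-set reduction of $\delta_t$; converting the Alon bound from an $m$-only inequality to a bound purely in terms of $t(\edge,W)$ also takes some care, but follows by absorbing the extra $n^{\delta_1}$ factor using the trivial bound $t(G,W) \leq 1$.
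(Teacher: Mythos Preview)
Your lower-bound argument is correct. The reduction showing that $\delta_t(G)$ is realised on an independent set is sound, and the three-block graphon $W_\epsilon$ indeed gives $t(K_{1,t},W_\epsilon)=\Theta(\epsilon^{t+1})$ and $t(G,W_\epsilon)=\Theta(\epsilon^{|V(G)|-\delta_t(G)})$ as you claim. This is a different construction from the paper's (which takes a clique of size $\lfloor n^{t/(t+1)}\rfloor$ with $n$ pendant leaves attached at one vertex), but yours works just as well and is arguably cleaner.

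The $t=1$ upper bound, however, has a genuine gap. Alon's inequality is $\hom(G,W)\le C_G\,m^{(v+\delta)/2}$, where $v=|V(G)|$, $\delta=\delta_1(G)$, and $m=|E(W)|$; substituting $m=n^{2}t(\edge,W)/2$ yields
\[
t(G,W)\;\le\;C'_G\,n^{\delta}\,t(\edge,W)^{(v+\delta)/2},
\]
so the exponent on the edge density is $(v+\delta)/2$, not $(v-\delta)/2$, and there is a residual factor $n^{\delta}$ that cannot be absorbed using $t(G,W)\le 1$. Concretely, for $G=P_2$ (so $v=3$, $\delta=1$) the target is $t(P_2,W)\le t(\edge,W)$, while Alon yields only $t(P_2,W)\le C'\,n\,t(\edge,W)^{2}$; when $W=K_n$ this reads $t(P_2,K_n)\le C'n$, strictly weaker than the trivial bound, and no interpolation between $C'n\,p^{2}$ and $1$ produces a uniform bound of the form $C''p$. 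The point is that Alon's extremal examples are sparse graphs (stars), whereas the density inequality must be tight on dense graphs (cliques), and the count bound is very loose there. The paper does not quote Alon's counting theorem; it uses instead his two \emph{structural} lemmas (an independent set realises $\delta_1$; a graph with $\delta_1=0$ vertex-partitions into edges and odd cycles) together with a Hall-matching argument to exhibit vertex-disjoint subgraphs $G_1,\dots,G_r$ of $G$, each an edge or an odd cycle, with $\sum_i|V(G_i)|=v-\delta$. Since $t(C_k,W)\le t(\edge,W)^{k/2}$ for every $k\ge 2$ (by $\ell_p$-norm monotonicity of the adjacency eigenvalues), this gives $t(G,W)\le\prod_i t(G_i,W)\le t(\edge,W)^{(v-\delta)/2}$ directly at the density level, with no constant and no dependence on $n$.
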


\begin{proof}
We handle first the general lower bound. For each positive integer $i$, take $W$ to be the weighted graph defined as follows.

Take a clique of size $\left\lfloor n^{\frac{t}{t+1}}\right\rfloor$ and connect one of its vertices to $n$ additional vertices. Then $t(K_{1,t},W)$ is $\Theta(n)$, while $t(G,W)=\Theta(n^{\frac{t+1}{|V(G)|-\delta_t(G)}})$; hence, this gives the desired lower bound. 

Next, we show that equality in fact holds when $t=1$. For this, we use the following two lemmas from \cite{alon1981number}.
\begin{lemma}
Let $H$ be any graph with $\delta_1(G)=0$. Then $H$ can be vertex-partitioned into edges and odd cycles. 
\end{lemma}

\begin{lemma}
For any graph $G$, there exists an independent set $I$ with $|I|-|N(I)|=\delta_1(G)$.
\end{lemma}

Now, take any graph $G$, and let $I$ be an independent set chosen such that $|I|-|N(I)|=\delta_1(G)$. Let $H$ be the subgraph of $G$ obtained by removing $I, N(I)$ from $G$. We first claim that $\delta_1(H)=0$. Indeed, suppose otherwise, and let $S\subset V(H)$ be a subset of vertices with $|S|>|N(S)|$. Then
\[|I\cup T|-|N(I\cup T)|=|I|+|T|-N(I)-N_H(T)>\delta_1(G),\]
a contradiction to the definition of $\delta_1$. 

Now, let $J$ denote the subgraph of $G$ defined by removing $H$, so that $J\cup H$ is a subgraph of $G$. We claim that every vertex in $N(I)$ can be matched to a unique vertex in $I$, so that the resulting edges are disjoint. Indeed, for this we need only check the Hall's Marriage lemma condition, that for any subset $R\subset N(I)$, we have $|N_J(R)|\ge |R|$. Suppose otherwise, that some $R$ satisfies the opposite inequality. Then  
\begin{align*}|I\sm N_J(R)|-|N(I\sm N_J(R))|&= |I|-|N_J(R)|-|N(I)|+2(|N(I)\sm N(I\sm N_J(R))|)
\\ &\ge \delta_1(G)+|R|-|N_J(R)|
\\ &>\delta_1(G),
\end{align*}
once again a contradiction. Therefore such a matching indeed exists, and it follows that:
\[\rho(G, \edge)\le \frac{1}{\frac{|V(H)|}{2}+|J|}=\frac{2}{|V(G)|+|J|-|I|}=\frac{2}{|V(G)|-\delta_1(G)}\]
as desired.
\end{proof}

\newcommand{\odd}{\text{ odd}}
\newcommand{\even}{\text{ even}}
\section{Paths and Cycles}
In this section, we take $P_n$ to be the path on $n$ edges, and $C_n$ to be the cycle on $n$ edges. 

\begin{theorem}\label{paths}
Let $m, n$ be positive integers. Then
\[\rho(P_m, P_n)=\begin{cases}
\frac{n}{m} & n>m,\text{ } n\even
\\ \frac{n}{m} & n>m, n\odd,\text{ } m\odd
\\ \frac{n+1}{m} & n \odd,\text{ } m\even
\\ \frac{n+1}{m+1} & n<m,\text{ } n\even
\\ \frac{n+1}{m+1}, & n<m, n\odd,\text{ } m\odd, n+1|m+1.
\end{cases}\]
\end{theorem}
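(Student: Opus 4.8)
The plan is to get every lower bound for free from Proposition \ref{lowb} and to attack the upper bounds by separating the regimes $n>m$ and $n<m$ and the parities of $m,n$. For the lower bounds: the constant graphon (Proposition \ref{lowb}(a)) gives $\rho(P_m,P_n)\ge n/m$, which is the claimed value in Cases 1 and 2; the looped-star sequence (Proposition \ref{lowb}(d)) gives $\rho(P_m,P_n)\ge\frac{|V(P_n)|-\alpha(P_n)}{|V(P_m)|-\alpha(P_m)}=\frac{\lceil n/2\rceil}{\lceil m/2\rceil}$, which equals $(n+1)/m$ exactly when $n$ is odd and $m$ is even, giving Case 3; and the two-block indicator graphon (Proposition \ref{lowb}(b)) gives $\rho(P_m,P_n)\ge\frac{|V(P_n)|}{|V(P_m)|}=\frac{n+1}{m+1}$, giving Cases 4 and 5. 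In each case one checks that the claimed value is the largest of the four bounds of Proposition \ref{lowb}, so the listed construction is in fact optimal.

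Two elementary facts organize the upper bounds. First, $t(P_{k+1},W)\le t(P_k,W)$ for every graphon $W$ and every $k$: integrating out the last vertex of $P_{k+1}$ replaces its last factor by $\int_0^1 W(\cdot,x)\,dx\le 1$, leaving $t(P_k,W)$ times a function bounded by $1$. Consequently $\rho(P_m,P_k)\le\rho(P_m,P_{k+1})$. Second, splitting $P_{a+b}$ at its $a$-th vertex and applying Cauchy--Schwarz — a special case of Proposition \ref{hold} — gives $t(P_{a+b},W)^2\le t(P_{2a},W)\,t(P_{2b},W)$; hence $k\mapsto t(P_{2k},W)$ is log-convex, and since it is bounded by $1$ with $t(P_0,W)=1$ its slopes $\tfrac1k\log t(P_{2k},W)$ are non-decreasing, so $t(P_{2k},W)^j\ge t(P_{2j},W)^k$ whenever $j\le k$.

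Upper bounds when $n>m$ (Cases 1, 2, and part of 3). If $m$ and $n$ are both even, the log-convexity bound immediately gives $t(P_n,W)\ge t(P_m,W)^{n/m}$. If $m$ is odd and $n$ is even, then $m-1$ and $m+1$ are even with $m-1<m+1\le n$; applying the log-convexity bound to $t(P_{m-1},W)$ and $t(P_{m+1},W)$, then combining with $t(P_m,W)^2\le t(P_{m-1},W)\,t(P_{m+1},W)$, yields $t(P_m,W)^{2n}\le t(P_n,W)^{2m}$, i.e. $t(P_n,W)\ge t(P_m,W)^{n/m}$; this completes Case 1. For Case 3 with $n>m$ (so $m$ even, $n$ odd): since $n+1$ is even and $>m$, the Case 1 bound gives $t(P_{n+1},W)\ge t(P_m,W)^{(n+1)/m}$, and monotonicity gives $t(P_n,W)\ge t(P_{n+1},W)$, so $\rho(P_m,P_n)\le(n+1)/m$. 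For the odd/odd Case 2, the divisible instances $m\mid n$ follow by applying the classical Blakely--Roy inequality to the graphon $W^{(m)}$ obtained as the $m$-fold composition of $W$: since $t(P_s,W^{(m)})=t(P_{ms},W)$, Blakely--Roy reads $t(P_{ms},W)\ge t(P_m,W)^s$. The non-divisible instances of Case 2 are exactly the content of the fractional Blakely--Roy inequality, Theorem \ref{br}.

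Upper bounds when $n<m$ (Cases 4, 5, and the rest of 3). Here the target is $t(P_m,W)\le t(P_n,W)^{(m+1)/(n+1)}$ in Cases 4 and 5, and $t(P_m,W)\le t(P_n,W)^{m/(n+1)}$ in Case 3. When the relevant divisibility holds — $(n+1)\mid(m+1)$ in Cases 4 and 5, which in particular covers all of Case 5, and $(n+1)\mid m$ in Case 3 — Shearer's entropy lemma applied to the exact tiling of the vertex set of $P_m$ (respectively of $P_{m-1}\subset P_m$, used with monotonicity) by consecutive blocks of $n+1$ vertices, each inducing a copy of $P_n$ once the linking edges are deleted, yields these inequalities directly, the passage from $\hom$ to density being clean precisely because the cover is exact. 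The remaining instances (the rest of Case 4, and Case 3 with $n<m$) require the weighted form of Shearer's lemma of Lemma \ref{entropy}, applied to a path regarded as a generalized bipartite graph, together with Hölder (Proposition \ref{hold}) to absorb the fractional part of the cover. The main obstacle — and the reason the regime $m>n$ with $m,n$ both odd and $n+1\nmid m+1$ is omitted — is that there the target $P_n$ is odd, so there is no free Cauchy--Schwarz reduction as in the $n>m$ analysis, while $m\nmid n$ and $n+1\nmid m+1$ rule out both divisibility tricks, and the construction of Proposition \ref{lowb}(b) does not match any upper bound the fractional arguments produce; I expect closing that case to require a genuinely new idea.
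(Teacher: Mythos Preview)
Your lower bounds and your treatment of Case 1 (including the neat Cauchy--Schwarz reduction from odd $m$ to even via $t(P_m,W)^2\le t(P_{m-1},W)t(P_{m+1},W)$), Case 3 for $n>m$, and the divisible sub-cases of Cases 3, 4, 5 are all correct and essentially match the paper. There are, however, two genuine gaps.

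\textbf{Case 2, non-divisible.} You invoke ``the fractional Blakely--Roy inequality, Theorem~\ref{br}.'' Theorem~\ref{br} is the statement $\frac{n}{k}\le\rho(C_{2k+1},C_{2n+1})\le\lceil n/k\rceil+1$ about \emph{odd cycles}; it says nothing about paths. The generalized Blakely--Roy Lemma~\ref{blakeroy} is also insufficient: it only compares $P_{0,n,\beta}$ to $P_1$, not $P_n$ to $P_m$ for general odd $m$. The inequality $t(P_n,W)\ge t(P_m,W)^{n/m}$ for odd $n>m$ with $m\nmid n$ is exactly the Erd\H{o}s--Simonovits conjecture; the paper does not prove it either but cites Blekherman--Raymond~\cite{blekherman2020proof}. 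Your composition trick handles $m\mid n$ cleanly, but the non-divisible case needs that external result.

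\textbf{Cases 3 and 4, non-divisible (i.e.\ $n<m$).} You propose to handle the fractional remainder via ``the weighted form of Shearer's lemma of Lemma~\ref{entropy}, applied to a path regarded as a generalized bipartite graph.'' Lemma~\ref{entropy} is specifically the inequality $t(K_{c,bc/a},W)\le t(K_{a,b},W)^{c/a}$ for \emph{complete} bipartite graphs; there is no evident way to specialize it to paths, and you give no indication of how ``absorbing the fractional part'' via Proposition~\ref{hold} would actually proceed. The paper's argument for Case 4 is quite different and more delicate: it first proves that $t(P_x,W)^a t(P_y,W)^b\ge t(P_z,W)^{a+b}$ whenever $x,y$ are even and $ax+by=(a+b)z$ (this is your log-convexity fact, slightly extended), and then combines this with the observation that $P_{(2m+1)(n+1)-1}$ contains $2m+1$ vertex-disjoint copies of $P_n$, choosing the weights $(a,b,x,y)=(m-n,\,2mn+m+n,\,(2m+1)(n+1)-1,\,n)$ so that $z=m$. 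Case 3 for $n<m-1$ is then deduced by picking an even $N>m$ with $(n+1)\mid(N+1)$ and chaining $t(P_n,W)\ge t(P_N,W)^{(n+1)/(N+1)}\ge t(P_m,W)^{(n+1)/m}$. Your sketch does not supply any mechanism of this kind.
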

\begin{proof}
The lower bounds all follow from the constructions in Proposition \ref{lowb}. In particular, the first and second follow from (a), the third follows from (d), and the fourth and fifth follow from (b). 

Now we turn our attention to the upper bounds. The second case, which is the Erd\H{o}s-Simonovits conjecture, was proved recently by Blekherman and Raymond in \cite{blekherman2020proof}, where they also discuss the older first case. Together, these cases extend the Blakely-Roy inequality $t(P_m)\ge t(P_1)^m$. 

Now we examine the third case. If $n\ge m-1$, then it follows from the inequality chain
\[t(P_n, W)\ge t(P_{n+1}, W)\ge t(P_m)^{\frac{n+1}{m}},\]
where the second inequality follows from the Erd\H{o}s-Simonovits conjecture. If $n<m-1$, consider an even number $N>m$ such that $n+1|N+1$. Then $P_N$ contains $\frac{N+1}{n}$ vertex-disjoint copies of $P_n$, so $t(P_n, W)\ge t(P_N, W)^\frac{n+1}{N+1}$ for any graphon $W$. So, using the $n\ge m-1$ case,
\[t(P_n, W)\ge t(P_N, W)^\frac{n+1}{N+1}\ge t(P_m, W)^\frac{n+1}{m}\]
as desired.

For the fourth case, we first prove a lemma:
\begin{lemma}
Suppose $a,b,x,y,z$ are positive integers such that $x, y$ are even and $ax+by=(a+b)z$ and let $W$ be any graph. Then
\[t(P_x,W)^at(P_y, W)^b\ge t(P_z, W)^{a+b}.\]
\end{lemma}
\begin{proof}
First, consider the special case $y=x+2, z=x+4, a=b=1$. In this case, the inequality becomes $t(P_x, W)t(P_{x+4}, W)\ge t(P_{x+2}, W)^2$, which is a special case of Proposition \ref{hold}. It follows that the sequence
\[\left\{\frac{t(P_4, W)}{t(P_2, W)}, \frac{t(P_6, W)}{t(P_4, W)}, \frac{t(P_8, W)}{t(P_6, W)}, \ldots\right\}\]
is nondecreasing. Therefore, when $y$ is even, and without loss of generality $x<y<z$, we may apply the following inequality chain.
\begin{align*}
(\frac{t(P_y,W)}{t(P_x,W)})^a &\le (\frac{t(P_y,W)}{t(P_{y-2},W)})^\frac{a(y-x)}{2}
\\ &=(\frac{t(P_y,W)}{t(P_{y-2},W)})^\frac{b(z-y)}{2}
\\ &\le (\frac{t(P_{y+2},W)}{t(P_y,W)})^\frac{b(z-y)}{2}
\\ &\le (\frac{t(P_z,W)}{t(P_y,W)})^a
\end{align*}
which rearranges to the desired result. When $y$ is odd, we may use $t(P_y, W)^2\le t(P_{y-1},W)t(P_{y+1}, W)$ to reduce to the even case, so the lemma is proved.
\end{proof}

Now we use this lemma. Note that $P_{2mn+2m+n}=P_{(2m+1)(n+1)-1}$ contains $2m+1$ vertex-disjoint copies of $P_n$. 
\begin{align*}
t(P_n, W)^{2m(m+1)}&=t(P_n, W)^{(m-n)(2m+1)}t(P_n, W)^{2mn+2m+n}
\\ &\ge t(P_{2mn+2m+n}, W)^{m-n}t(P_n, W)^{2mn+m+n}
\\ &\ge t(P_m, W)^{2m(n+1)}
\end{align*}
as desired.

Finally, the fifth upper bound follows from the fact that $P_m$ contains $\frac{m+1}{n+1}$ copies of $P_n$ when $n+1|m+1$. We're done. 
\end{proof}

\begin{remark}
The remaining case, $\rho(P_m, P_n)$ for $n<m$ both odd and $n+1\nmid m+1$, will require a different kind of construction. For example, $\rho(P_5, P_3)\ge \frac{7}{10}$ via the construction in \cite{kopparty2011homomorphism} beats the $\frac{n+1}{m+1}$ bound. 
\end{remark}

\begin{proposition}\label{cycles}
For positive integers $m, n$ such that $m>2n$, we have $\rho(C_m, C_{2n})=\frac{2n}{m}$.
\end{proposition}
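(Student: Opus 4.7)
The plan is to prove the two matching inequalities separately. The lower bound $\rho(C_m, C_{2n}) \ge \frac{2n}{m}$ is immediate from Proposition \ref{lowb}(a) applied to $G = C_m$ and $H = C_{2n}$, since $|E(C_{2n})|/|E(C_m)| = 2n/m$; the constant graphon is the witness.

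For the upper bound I would pass to the spectral side of the graphon. View $W$ as the kernel of the integral operator $T_W$ on $L^2[0,1]$; since $W$ is symmetric and bounded, $T_W$ is compact, self-adjoint, and Hilbert--Schmidt, and hence admits a real eigenvalue sequence $\{\lambda_i\}_{i \ge 1}$ with $\sum_i \lambda_i^2 < \infty$. The identity I would rely on is
\[
t(C_k, W) \;=\; \mathrm{tr}(T_W^k) \;=\; \sum_i \lambda_i^k \qquad (k \ge 2),
\]
obtained by writing the kernel of $T_W^k$ as an iterated integral of $W$ and integrating along the diagonal. With this identity in hand, the required inequality $t(C_{2n}, W) \ge t(C_m, W)^{2n/m}$ reduces to showing $\sum_i \lambda_i^{2n} \ge (\sum_i \lambda_i^m)^{2n/m}$.

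I would prove the latter in two steps. First, bound $t(C_m, W) = \sum_i \lambda_i^m \le \sum_i |\lambda_i|^m$; this is an equality when $m$ is even, and otherwise simply accounts for possibly negative eigenvalues. Next, apply the monotonicity of $\ell^p$-norms on counting measure: since $m \ge 2n$,
\[
\Bigl(\sum_i |\lambda_i|^m\Bigr)^{1/m} \;\le\; \Bigl(\sum_i |\lambda_i|^{2n}\Bigr)^{1/(2n)}.
\]
Combining these two inequalities and raising to the appropriate powers yields
\[
t(C_m, W)^{2n/m} \;\le\; \Bigl(\sum_i |\lambda_i|^m\Bigr)^{2n/m} \;\le\; \sum_i |\lambda_i|^{2n} \;=\; \sum_i \lambda_i^{2n} \;=\; t(C_{2n}, W),
\]
which is the required bound.

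The only point that needs real care is the spectral trace formula for a general graphon; this is a standard consequence of the spectral theorem for Hilbert--Schmidt operators (in particular $T_W^k$ is trace-class for $k \ge 2$). A reader who prefers to avoid operator theory can approximate $W$ by step graphons, for which the identity is the elementary matrix formula $\mathrm{tr}(A^k) = \sum \lambda_i(A)^k$ for a symmetric matrix $A$, and then take limits using continuity of $t(C_k, \cdot)$ and of eigenvalue sums under cut-norm convergence.
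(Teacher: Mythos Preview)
Your proof is correct and follows essentially the same approach as the paper: both establish the lower bound via Proposition~\ref{lowb}(a) and the upper bound via the trace identity $t(C_k,W)=\sum_i \lambda_i^k$ together with the monotonicity of $\ell^p$ norms. The paper restricts to graphs $W$ (so the eigenvalues are those of a finite symmetric matrix) and leaves the passage through $|\lambda_i|$ for odd $m$ implicit, whereas you work directly with graphons and make that step explicit; these are cosmetic differences, not a different argument.
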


\begin{proof}
The lower bound follows from the construction in Proposition \ref{lowb}(a).

For the upper bound, let $W$ be any graph, and let its adjacency matrix be $M$. Suppose the eigenvalues of $M$ are $\lam_1, \ldots, \lam_k$; they are all real because $M$ is nonnegative real symmetric. Then the desired inequality is equivalent with:
\[(\sum_{i=1}^k(\frac{\lam_i}{k})^m)^{\frac{n}{m}}\ge \sum_{i=1}^k (\frac{\lam_i}{k})^n\]
which follows from the monotonocity of the $L_p$ norm. Here, we have used the fact that $\sum_{i=1}^k \lam_i^k$ is the trace of $M^k$, which counts the number of length $k$ cycles in $W$.
\end{proof}

\begin{theorem}
Let $m, n$ be positive integers. Then if $m>2$,
\[\rho(C_m, P_n)=\begin{cases}\frac{n+1}{m} & n\le m-1
\\ \frac{n}{m-1} & n\ge m-1
\end{cases}\]
and if $n>1$,
\[\rho(P_m, C_{2n})=\frac{2n}{m}.\]
\end{theorem}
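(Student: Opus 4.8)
The plan is to treat the three asserted equalities as three independent bound-pairs, in each case citing a construction from Proposition~\ref{lowb} (or a small modification) for the lower bound and chaining known inequalities for the upper bound. For $\rho(C_m,P_n)$ with $n\le m-1$: the lower bound $\frac{n+1}{m}$ comes from Proposition~\ref{lowb}(d), since $P_n$ has $\lceil (n+1)/2\rceil$ vertices in a maximum independent set while $C_m$ (for $m>2$) has $\lfloor m/2\rfloor$, and one checks $\frac{|V(P_n)|-\alpha(P_n)}{|V(C_m)|-\alpha(C_m)}=\frac{(n+1)-\lceil(n+1)/2\rceil}{m-\lfloor m/2\rfloor}$ equals $\frac{n+1}{m}$ when $m$ is even; when $m$ is odd one instead uses part (a), $\frac{|E(P_n)|}{|E(C_m)|}=\frac{n}{m}$, which is dominated by $\frac{n+1}{m}$, so the binding construction is the looped-star sequence of part (d) applied appropriately — I would double-check the parity bookkeeping here, as it is the one genuinely fiddly point. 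For the upper bound, $P_n$ is a subgraph of $P_m$ after possibly first passing to $P_{n+1}$ when $n<m-1$ (using $t(P_n,W)\ge t(P_{n+1},W)$, the Blakely--Roy monotonicity already invoked in Theorem~\ref{paths}), together with the observation that $P_m$ sits inside $C_m$: formally $t(P_m,W)\ge t(C_m,W)$ is false in general, so instead I would use that $C_m$ contains $P_{m-1}$ as a subgraph, giving $t(C_m,W)\le t(P_{m-1},W)$, and then apply $\rho(C_m,P_n)\le\rho(P_{m-1},P_n)$ and read off the value from Theorem~\ref{paths}.

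For $\rho(C_m,P_n)$ with $n\ge m-1$: the lower bound $\frac{n}{m-1}$ should follow from part (a) of Proposition~\ref{lowb} after noting $P_n$ contains many copies of $P_{m-1}$, or directly by taking $W$ a weighted graph that is essentially a large clique (so that long paths and long subpaths of a cycle scale the same way). The cleanest route is: $C_m$ contains $P_{m-1}$ vertex-disjointly exactly once, so $t(C_m,W)\le t(P_{m-1},W)$, hence $\rho(C_m,P_n)\le\rho(P_{m-1},P_n)$; and when $n\ge m-1$, Theorem~\ref{paths} gives $\rho(P_{m-1},P_n)=\frac{n}{m-1}$ (the $n>m'$, suitable parity cases) or at worst $\frac{n+1}{m-1}$, so I need to confirm the parity of $m-1$ lands us in the exact regime of Theorem~\ref{paths}, and where it does not, supply the missing inequality by the path-chaining lemma from Section~4. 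The matching lower bound $\rho(C_m,P_n)\ge\rho(P_{m-1},P_n)\ge\frac{n}{m-1}$ needs $\rho(C_m,P_n)\ge\rho(P_{m-1},P_n)$, which is \emph{not} immediate from Proposition~2.1(a); instead I would exhibit the construction directly, i.e.\ take the clique-based weighted graphon and compute both densities, so that the lower bound is self-contained rather than derived.

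For $\rho(P_m,C_{2n})=\frac{2n}{m}$ with $n>1$: this is precisely Proposition~\ref{cycles} when $m>2n$, so only the range $m\le 2n$ remains. The lower bound $\frac{2n}{m}$ is Proposition~\ref{lowb}(a) since $\frac{|E(C_{2n})|}{|E(P_m)|}=\frac{2n}{m}$. For the upper bound when $m\le 2n$, I would first reduce to the case $m\mid 2n$ (or to $m$ even) using the Hölder-type Proposition~\ref{hold} / the parity lemma of Section~4 to interpolate between $t(P_m,W)$ and $t(P_{m'},W)$ for a convenient $m'$ dividing $2n$, and then use that $C_{2n}$, after being cut open, contains $\frac{2n}{m'}$ vertex-disjoint copies of $P_{m'}$, giving $t(C_{2n},W)\le$ (no — wrong direction again): the correct move is $t(C_{2n},W)=t(C_{2n},W)$ and we want a \emph{lower} bound on $t(C_{2n},W)$ in terms of $t(P_m,W)$. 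So instead I would invoke a spectral identity as in the proof of Proposition~\ref{cycles}: writing $t(C_{2n},W)=\frac{1}{k^{2n}}\sum_i\lambda_i^{2n}\ge 0$ and $t(P_m,W)=\frac{1}{k^{m+1}}\mathbf 1^\top M^m\mathbf 1$, and bounding the path sum by the cycle sum via Cauchy--Schwarz / power-mean on the $\lambda_i$ after expanding $\mathbf 1$ in the eigenbasis of $M$ — this is exactly the mechanism behind Proposition~\ref{cycles} and extends to $m\le 2n$ with $2n$ even. The main obstacle throughout is getting the inequality \emph{directions} right when passing between paths, subpaths of cycles, and cycles: a path is a subgraph of a cycle (so $t(P,\cdot)\ge t(C,\cdot)$ is \emph{false}; rather $t(C,\cdot)\le$ nothing automatic), whereas a cycle contains a path as a subgraph (giving $t(C,\cdot)\le t(P,\cdot)$), and one must consistently use the ``supergraph has smaller density'' principle together with the vertex-disjoint-copies principle of Proposition~2.1(d). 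I expect the parity case analysis in the first two formulas, and the reduction to a divisible sub-path in the third, to be the places where care is needed; everything else is assembling Theorem~\ref{paths}, Proposition~\ref{cycles}, Proposition~\ref{hold}, and Proposition~\ref{lowb}.
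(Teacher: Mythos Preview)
Your plan has the right skeleton but several of the load-bearing steps are either misidentified or will not close. First, the lower bounds: you reach for parts (d) and (a) of Proposition~\ref{lowb} and end up in a parity tangle, but the paper simply uses (b), (c), (a) for the three equalities respectively, and these give exactly $\frac{n+1}{m}$, $\frac{n}{m-1}$, $\frac{2n}{m}$ with no casework. Your (d)-computation does not even yield $\frac{n+1}{m}$ when $n$ is even and $m$ is even, for instance. Second, Proposition~\ref{cycles} concerns $\rho(C_m, C_{2n})$, not $\rho(P_m, C_{2n})$, so your claim that the $m>2n$ range is ``precisely Proposition~\ref{cycles}'' is a misreading; the paper instead chains $t(C_{2n},W)\ge t(C_{2mn},W)^{1/m}\ge t(P_m,W)^{2n/m}$, using Proposition~\ref{cycles} only for the first inequality and the Sidorenko property of even cycles for the second.

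The most substantive gap is in the upper bound for $\rho(C_m,P_n)$ when $n\ge m-1$ with $n$ and $m$ both odd. Your route through $P_{m-1}\subset C_m$ gives $\rho(C_m,P_n)\le \rho(P_{m-1},P_n)$, but here $m-1$ is even and $n$ is odd, so Theorem~\ref{paths} returns $\frac{n+1}{m-1}$, strictly larger than the target $\frac{n}{m-1}$; the ``path-chaining lemma from Section~4'' treats only even path lengths and will not rescue this. The paper's fix is to blow up one vertex of $C_m$ to obtain $C_m'$, observe that $P_m\subset C_m'$, and combine $t(P_n,W)^m\ge t(P_m,W)^n$ and $t(P_n,W)^{m-2}\ge t(P_{m-2},W)^n$ (both valid since $n,m,m-2$ are all odd) with the H\"older-type bound $t(C_m',W)\,t(P_{m-2},W)\ge t(C_m,W)^2$. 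This idea is absent from your outline and is the one genuinely new ingredient beyond Theorem~\ref{paths}.
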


\begin{proof}
The lower bounds follows from (b), (c), and (a) respectively in Proposition \ref{lowb}, so we turn to the upper bounds.

We'll deal first with the second upper bound. Here, we have the following inequality chain:
\[t(C_{2n}, W)\ge t(C_{2mn}, W)^{\frac{1}{m}}\ge t(P_m, W)^{\frac{2n}{m}}.\]
The first inequality follows from Proposition \ref{cycles}, while the second holds since Sidorenko's conjecture is true for cycles. 

Now we consider the first upper bound. Suppose that $n\ge m-1$. If $n$ is even, then Proposition \ref{paths} yields the chain:
\[t(P_n, W)\ge t(P_{m-1}, W)^{\frac{n}{m-1}}\ge t(C_m, W)^{\frac{n}{m-1}}\]
since $P_{m-1}$ is a subgraph of $C_m$. If $n$ is odd, then there are two cases. If $m$ is even, then $m-1$ is odd and the proof follows as above. Otherwise, let $C_m'$ denote the graph obtained from making blowing up one vertex in $C_m$ by a factor of two; note that it contains $P_m$ as a subgraph. Then we have the following chain.
\begin{align*}
t(P_n, W)^{2m-2}&= t(P_n,W)^mt(P_n,W)^{m-2}
\\ &\ge t(P_m,W)^nt(P_{m-2},W)^n
\\ &\ge t(C_m', W)^nt(P_{m-2}, W)^n
\\ &\ge t(C_m, W)^{2n}
\end{align*}
where the last line is a special case of Proposition \ref{hold}. 
\end{proof}

Finally, we give some bounds on $\rho(C_k, C_n)$ for $n$ even, and also for $k<n$ both odd. 

\begin{theorem}
Let $n,k$ be positive integers with $2n\ge k\ge 3$. Then
\[\frac{2n-1}{k-1}\le \rho(C_k,C_{2n})\le \frac{2n-1-\frac{1}{2n-1}}{k-1-\frac{1}{2n-1}}<\frac{2n-1}{k-1}+\frac{1}{(k-1)^2}.\]
\end{theorem}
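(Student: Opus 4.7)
The lower bound $\rho(C_k,C_{2n})\ge \frac{2n-1}{k-1}$ is immediate from Proposition~\ref{lowb}(c): both $C_k$ and $C_{2n}$ are connected and $|V(C_m)|=m$, so the ratio $(|V(C_{2n})|-1)/(|V(C_k)|-1)$ yields exactly $(2n-1)/(k-1)$.

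For the upper bound, my plan is to fix a graphon $W$ and work spectrally. Writing $t(C_m,W)=\sum_i\mu_i^m$ for the real eigenvalues $\mu_i$ of the Hilbert--Schmidt operator $T_W$ associated with $W$, and setting $q:=2n-1$, the target inequality is equivalent to
\[
t(C_{2n},W)^{q(k-1)-1}\;\ge\; t(C_k,W)^{q^2-1}.
\]
I would combine two ingredients. First, Proposition~\ref{cycles} gives $t(C_{2n},W)^{M/(2n)}\ge t(C_M,W)$ for every even $M\ge 2n$. Second, I would apply Proposition~\ref{hold} with a carefully chosen base graph $G$ and attached subgraphs $S_i$ to deduce a lower bound of the form $t(C_M,W)\ge t(C_k,W)^{\alpha}$ for suitable $M$ and $\alpha$. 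A natural first attempt is to take $G=C_{2n-1}$ with a path of length $k-1$ attached along each of its edges, so that $G_{\{S\}}(\mathbf{1})=C_{(2n-1)(k-1)}$, and to use H\"older manipulations to compare $t(C_{(2n-1)(k-1)},W)$ with a power of $t(C_k,W)$.

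The main obstacle is producing the precise exponents $q(k-1)-1$ and $q^2-1$ (in particular, the $-1$ corrections), rather than the weaker $q(k-1)$ and $q^2$ that a direct chain of the above two steps would give. Overcoming this likely requires iterating H\"older once more, or interpolating between two integer power-mean inequalities, in order to shave off the boundary term. The case of odd $k$, where $t(C_k,W)$ can vanish for bipartite $W$, does not add difficulty since the target inequality is then trivial.

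The algebraic strict bound $c<\frac{2n-1}{k-1}+\frac{1}{(k-1)^2}$ is elementary: direct computation with $c=\bigl((2n-1)^2-1\bigr)/\bigl((2n-1)(k-1)-1\bigr)$ yields
\[
c-\frac{2n-1}{k-1}\;=\;\frac{2n-k}{(k-1)\bigl((2n-1)(k-1)-1\bigr)},
\]
and after clearing denominators the claim reduces to $(k-1)^2>1$, which holds for all $k\ge 3$.
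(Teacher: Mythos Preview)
Your lower bound via Proposition~\ref{lowb}(c) and your verification of the strict algebraic inequality at the end are both correct and match the paper's treatment.

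The upper bound, however, is not proved in your proposal. You correctly set up the spectral formulation and correctly identify the target inequality as
\[
t(C_{2n},W)^{q(k-1)-1}\ \ge\ t(C_k,W)^{q^2-1},\qquad q=2n-1,
\]
but your proposed route---bounding $t(C_{2n},W)^{M/(2n)}\ge t(C_M,W)$ via Proposition~\ref{cycles} for a large even $M$, and then attaching paths to a cycle via Proposition~\ref{hold}---goes in the wrong direction: passing to a \emph{longer} cycle $C_M$ gives no leverage toward a \emph{shorter} cycle $C_k$, and you yourself note that the resulting exponents miss the crucial $-1$ corrections. The speculative fix (``iterating H\"older once more, or interpolating between two integer power-mean inequalities'') is not carried out, so the main inequality remains open in your write-up.

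The paper's argument supplies exactly the missing idea: interpolate spectrally \emph{downward}, between $C_{2n}$ and $C_2=K_2$. Writing $t(C_m,W)=\sum_i\mu_i^m$, H\"older with weights $\tfrac{k-2}{2n-2}$ and $\tfrac{2n-k}{2n-2}$ gives
\[
\Bigl(\sum_i\mu_i^{2n}\Bigr)^{k-2}\Bigl(\sum_i\mu_i^{2}\Bigr)^{2n-k}\ \ge\ \Bigl(\sum_i\mu_i^{k}\Bigr)^{2n-2},
\]
i.e.\ $t(C_{2n},W)^{k-2}\,t(K_2,W)^{2n-k}\ge t(C_k,W)^{2n-2}$. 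The Sidorenko bound $t(C_{2n},W)\ge t(K_2,W)^{2n}$ then converts the $t(K_2,W)$ factor back into $t(C_{2n},W)$, yielding precisely
\[
t(C_{2n},W)^{2nk-2n-k}\ \ge\ t(C_k,W)^{2n(2n-2)},
\]
which is the target. So the ``boundary term'' you were unable to shave off is accounted for by bringing in the edge $C_2$ as the second interpolation endpoint, not a long cycle $C_M$.
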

\begin{proof}
The lower bound follows from Proposition \ref{lowb}(c).

For the upper bound, we use the same setup and notation as in the proof of Proposition \ref{cycles}. We calculate, via Holder's inequality:
\begin{align*}
t(C_{2n},W)^{2nk-2n-k}&\ge t(C_{2n},W)^{2n(k-2)}t(K_2,W)^{2n(2n-k)}
\\ &=(\sum_{i=1}^k(\frac{\lam_i}{k})^{2n})^{2n(k-2)}(\sum_{i=1}^k(\frac{\lam_i}{k})^{2})^{2n(2n-k)}
\\ &\ge (\sum_{i=1}^k(\frac{\lam_i}{k})^{k})^{2n(2n-2)}
\\ &= t(C_k, W)^{2n(2n-2)}.
\end{align*}
Therefore $\rho(C_{2n}, C_k)\le \frac{2n(2n-2)}{2nk-2n-k}=\frac{2n-1-\frac{1}{2n-1}}{k-1-\frac{1}{2n-1}}$ as desired.
\end{proof}

For fixed $k$ and $n\to\infty$, the difference between lower and upper bounds approaches $\frac{1}{(k-1)^2}$; it would be interesting to close this gap.
\begin{question}
Is it true, for positive integers $n,k$ satisfying $2n\ge k\ge 3$, that $\rho(C_k, C_{2n})=\frac{2n-1}{k-1}+o_{k,n}(1)$?
\end{question}

Finally, we consider the case where $n>k$ and both are odd. 
\begin{theorem}\label{br}
Let $n>k$ be positive integers. Then
\[\frac{n}{k}\le \rho(C_{2k+1},C_{2n+1})\le \left\lceil\frac{n}{k}\right\rceil +1.\]
\end{theorem}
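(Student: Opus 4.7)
The lower bound $\rho(C_{2k+1},C_{2n+1})\ge n/k$ is immediate from Proposition~\ref{lowb}(c), since both $C_{2k+1}$ and $C_{2n+1}$ are connected and $(|V(C_{2n+1})|-1)/(|V(C_{2k+1})|-1)=2n/(2k)=n/k$.

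For the upper bound, set $m=\lceil n/k\rceil$; we must show $t(C_{2n+1},W)\ge t(C_{2k+1},W)^{m+1}$ for every graphon $W$. We may assume $W$ is non-bipartite, since otherwise $t(C_{2k+1},W)=0$ and the inequality is trivial. The natural first step is to view $C_{2n+1}$ as the concatenation of two paths --- one of length $2k+1$ and one of length $2(n-k)$ --- sharing common endpoints, which yields the identity
\[\hom(C_{2n+1},W)=\sum_{x,y\in V(W)}(M^{2k+1})_{xy}\,(M^{2(n-k)})_{xy},\]
where $M$ is the (weighted) adjacency matrix of $W$. The goal becomes to bound the right-hand side from below by $\hom(C_{2k+1},W)^{m+1}$ up to the correct power of $N$ dictated by the density normalization.

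To achieve this, I would develop a \emph{fractional Blakely--Roy inequality}, generalizing the classical $t(P_\ell,W)\ge t(K_2,W)^\ell$ to a weighted/rooted walk-matrix setting. Just as the classical Blakely--Roy follows from iterating Cauchy--Schwarz against the uniform vector $\mathbf{1}$, the fractional version iterates Cauchy--Schwarz against a rooted walk-weight to bound sums of the form $\sum_{x,y}(M^a)_{xy}(M^b)_{xy}$ from below by a product involving $m+1$ copies of $\operatorname{tr}(M^{2k+1})$, plus the appropriate normalization in $N$. Intuitively, the exponent $\lceil n/k\rceil$ counts how many disjoint copies of $C_{2k+1}$ can be ``packed'' along the longer cycle, and the additional ``$+1$'' absorbs the overhead from closing the final path back into a cycle.

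The main technical obstacle is the fractional Blakely--Roy lemma itself. The clean spectral proof that works for even cycles (Proposition~\ref{cycles}) breaks down in the odd-cycle regime, because the eigenvalue sums $\sum\lambda_i^{2k+1}$ involve signed contributions. Consequently, one cannot simply invoke monotonicity of $L_p$-norms; instead the iteration must be carried out through an induction on $m$, where at each step a two-term Cauchy--Schwarz (or an application of Proposition~\ref{hold}) trades an increment of $2k$ in walk length against one additional power of $\operatorname{tr}(M^{2k+1})$ on the right-hand side, while the extra factor of $N$ that accumulates is reabsorbed by the normalization $t(C_{2n+1},W)=\hom(C_{2n+1},W)/N^{2n+1}$. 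The trickiest accounting appears in the boundary case where $mk-n$ is small, since then the ceiling is just barely attained; once this case is handled, the full upper bound follows by substitution into the two-path decomposition and a rearrangement.
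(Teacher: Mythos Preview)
Your lower bound is correct and matches the paper exactly.

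For the upper bound, your plan correctly identifies that the crux is a ``fractional Blakely--Roy'' phenomenon, and this is indeed what the paper proves. But your proposal stops at naming the obstacle; it does not supply the mechanism that overcomes it, and the paper's mechanism is not the one you sketch.

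Concretely, the paper does \emph{not} work directly with your two-endpoint decomposition $\sum_{x,y}(M^{2k+1})_{xy}(M^{2(n-k)})_{xy}$. Instead it proceeds through three structural steps that are absent from your plan:
\begin{enumerate}
\item It proves a generalized Blakely--Roy inequality $t(P_{0,r,\beta},W)\ge t(P_1,W)^{r+\beta}$ for paths carrying a fractional pendant edge of weight $\beta$ at one end; the proof is a short induction in $r$ using a midpoint Cauchy--Schwarz.
\item It introduces an auxiliary weighted graph $W'$ on $V(W)$ whose edge weight at $(a,b)$ is the number of homomorphisms $C_{2k+1}\to W$ sending the first two cycle vertices to $a,b$. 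Walks in $W'$ then correspond to chains of $(2k{+}1)$-cycles in $W$ glued along an edge, and a \emph{single} edge of $W'$ already encodes a full $C_{2k+1}$. This is the device that converts ``one more power of $\operatorname{tr}(M^{2k+1})$'' into ``one more step of Blakely--Roy,'' and it is the idea your induction-on-$m$ sketch is missing.
\item With $W'$ in hand, the paper bounds the lollipop graph $G_{k,\ell}$ (a $C_{2k+1}$ with a tail $P_\ell$) by applying (i) to $W'$, obtaining $t(G_{k,\ell},W)\ge t(C_{2k+1},W)^{1+\frac12\lceil \ell/k\rceil}$. Finally it embeds $C_{2n+1}$ in a theta-type graph $Y$ (a $(2k{+}2n)$-cycle with two chords from a single vertex) and uses one Cauchy--Schwarz split of $Y$ into two copies of $G_{k,n-k}$, giving the exponent $2+\lceil (n-k)/k\rceil=1+\lceil n/k\rceil$.
\end{enumerate}

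Your proposed route---iterate Cauchy--Schwarz on $\sum_{x,y}(M^{a})_{xy}(M^{b})_{xy}$ against a ``rooted walk-weight'' to peel off powers of $\operatorname{tr}(M^{2k+1})$---is not obviously implementable: there is no stated induction hypothesis, and a naive step that replaces $b$ by $b-2k$ does not produce a factor of $\operatorname{tr}(M^{2k+1})$ (the trace is a one-point quantity, while your sum is two-pointed). The paper resolves exactly this mismatch by passing to $W'$, which collapses a whole $C_{2k+1}$ to a single edge so that ordinary Blakely--Roy applies. Without that idea (or an equivalent), the plan has a genuine gap at its declared ``main technical obstacle.''
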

\begin{proof}
The lower bound again follows from construction (c) in Proposition \ref{lowb}.

For the upper bound, we make use of the following to lemmas. For nonnegative reals $0\le \alpha, \beta \le 1$ and a nonnegative integer $r$, define the \textit{generalized path} $P_{\alpha,r,\beta}$ to be a symbol which satisfies, for graphs $W$,
\[t(P_{\alpha,r,\beta},W)=\frac{1}{|V(W)|^{r+1+\alpha+\beta}}\sum_{v_0, \ldots, v_n\in V(W)\atop (v_i, v_{i+1})\in E(W)\forall i}d_{v_0}^\alpha d_{v_1}^\beta.\]
This can be thought of as a path with partial edges attached at each end, with weights $\alpha$ and $\beta$.

\begin{lemma}[Generalized Blakely-Roy inequality]\label{blakeroy}
For positive integers $r$, positive reals $0\le \beta\le 1$, and weighted graphs $W$, the following holds.
\[t(P_{0,n,\beta}, W)\ge t(P_1, W)^{n+\beta}.\]
\end{lemma}
\begin{proof}
We proceed by induction on $n$. For $n=1$, we may calculate:
\begin{align*}
t(P_{0,1,\beta},W)&= \frac{1}{|V(W)|^{2+\beta}}\sum_{v\in V(W)}d_v^{1+\beta}
\\ &= \frac{1}{|V(W)|^{1+\beta}}(\frac{1}{|V(W)|}\sum_{v\in V(W)}d_v^{1+\beta})
\\ &\ge \frac{1}{|V(W)|^{1+\beta}}(\frac{1}{|V(W)|}\sum_{v\in V(W)}d_v)^{1+\beta}
\\ &= (\frac{1}{|V(W)|^2}\sum_{v\in V(W)}d_v)^{1+\beta}
\\ &= t(P_1, W)^{1+\beta}
\end{align*}
where we have used the power mean inequality. 

For the inductive step, suppose $r\ge 2$, and that the statement holds for all $r'<r$. We take two cases. 

If $r$ is even, then due to Proposition \ref{hold} and Cauchy-Schwarz,
\begin{align*}
t(P_{0,r,\beta}, W)&\ge t(P_{\frac{\beta}{2}, r, \frac{\beta}{2}}, W)
\\ &\ge t(P_{0, \frac{r}{2}, \frac{\beta}{2}}, W)^2
\\ &\ge t(P_1, W)^{r+\beta}
\end{align*}
where we've used the inductive hypothesis in the last step. 

Similarly, if $r$ is odd,
\begin{align*}
t(P_{0,r,\beta}, W)&= t(P_{1, r-1, \beta}, W)
\\ &\ge t(P_{\frac{1+\beta}{2}, r-1, \frac{1+\beta}{2}}, W)
\\ &\ge t(P_{0, \frac{r-1}{2}, \frac{1+\beta}{2}}, W)^2
\\ &\ge t(P_1, W)^{r+\beta}.
\end{align*}

This completes the inductive step, so we are done.
\end{proof}

\begin{remark}
Taking $\beta=0$ in the previous lemma yields the Blakely-Roy inequality, $t(P_n,W)\ge t(P_1,W)^{n}$. 
\end{remark}

\begin{lemma}
Suppose $k$ is a positive integer, and $G_{k,\ell}$ is the graph on $2k+\ell+1$ vertices consisting of a cycle with length $2k+1$, with one of its vertices connected to an end of a path on $\ell$ edges. Then 
\[t(G_{k,\ell}, W)\ge t(C_{2k+1}, W)^{1+\frac{1}{2}\left\lceil\frac{\ell}{k}\right\rceil}.\]
\end{lemma}
\begin{proof}
Set $r=\left\lceil\frac{\ell}{2k}\right\rceil$, and $s=2k\lceil\frac{\ell}{2k}\rceil-\ell$. Define a graph $X$ as follows. If $0\le s\le k-1$, then $X$ consists of a $P_{r+1}$, with an additional length $2k$ path added between every pair of adjacent vertices in this base path. If $k\le s\le 2k-1$, then $X$ consists of a $P_r$, with an additional length $2k$ path between every pair of adjacent vertices in the base path, and with an additional length $k$ path attached at one end of the base path. Observe that in either case, $X$ contains $G_{k, \ell}$ as a subgraph. Let $W'$ denote the auxiliary weighted graph, defined by $W(a,b)$ equals the number of ordered cycles in $W$ containing $a,b$ as their first and second vertices, respectively.

We take two cases. First, suppose that $0\le s\le k-1$. Making use of the Lemma \ref{blakeroy}, we have
\begin{align*}t(G_{k, \ell}, W)&\ge t(X, W) 
\\&= t(P_{r+1}, W')
\\ &\ge t(P_1,W')^{r+1}
\\ &= t(C_{2k+1}, W)^{r+1}
\\ &=t(C_{2k+1},W)^{1+\lceil\frac{\ell}{2k}\rceil}
\\ &=t(C_{2k+1},W)^{1+\frac{1}{2}\lceil\frac{\ell}{2k}\rceil}.
\end{align*}
Here, we've used the fact that since the difference between $\ell$ and $\ell-2k\lceil\frac{\ell}{2k}\rceil$ is at most $k-1$, it holds that $\lceil\frac{\ell}{k}\rceil=2\lceil\frac{\ell}{2k}\rceil.$

If $r$ is odd, we take a similar approach. We first note that for any given vertex, the number of $P_k$s which start at that vertex is at least the square root of the number of $C_{2k+1}$s containing that vertex, since the square of the first quantity is the number of $P_{2k}$s which have the given vertex as a middle vertex. This means that in the odd case, $t(X, W)\ge t(P_{0,r,\frac{1}{2}}, W')$. Using this, we calculate as before.
\begin{align*}t(G_{k, \ell}, W)&\ge t(X, W) 
\\&= t(P_{0,r,\frac{1}{2}}, W')
\\ &\ge t(P_1,W')^{r+1}
\\ &= t(C_{2k+1}, W)^{r+1}
\\ &=t(C_{2k+1},W)^{1+\lceil\frac{\ell}{2k}\rceil}
\\ &=t(C_{2k+1},W)^{1+\frac{1}{2}\lceil\frac{\ell}{2k}\rceil}.
\end{align*}
\end{proof}

Now we proceed to the proof of the upper bound. Consider the following auxiliary graph $Y$: start with a cycle $v_1,v_2\ldots, v_{2k+2n}$ on $2k+2n$ vertices, and additionally connect $v_1$ to $v_{2k+1}$ and $v_{2n+1}$. Note that $Y$ contains a $C_{2n+1}$. Using the notation from the previous lemma, an application of \ref{hold} yields:
\begin{align*}t(C_{2n+1},W)&\ge t(Y,W)
\\ &\ge t(G_{k, \ell}, W)^2
\\ &\ge t(C_{2k+1}, W)^{2+\left\lceil\frac{n-k}{k}\right\rceil}
\\ &=t(C_{2k+1}, W)^{1+\left\lceil\frac{n}{k}\right\rceil}
\end{align*}
as desired.
\end{proof}

And we can ask a similar question here.
\begin{question}
Is it true that, for positive integers $n>k$, $\rho(C_{2k+1}, C_{2n+1})=\frac{n}{k}+o_{k,n}(1)$?
\end{question}

\section{Complete and Complete Multipartite Graphs}
We first examine the case where both $G=K_s, H=K_t$ are complete graphs. According to Proposition \ref{cond}, this quantity is finite exactly when $s\ge t$. In this case, $\rho(K_s, K_t)=\frac{t}{s}$ due to Kruskal-Katona \cite{kk}. 

We switch our attention now to complete multipartite graphs. We'll mostly use the convention that indices are always written in nonincreasing order. For a positive integer sequence $\textbf{a}=\{a_1,\ldots, a_n\}$, we write $K_{\bf{a}}$ as shorthand for $K_{a_1,\ldots, a_n}$. 

First, two quick lemmas:
\begin{lemma}\label{center}
Suppose $a,b>1$ are integers. Then, for any graphon $W$, 
\[t(K_{a+1,b-1}, W)t(K_{a-1, b+1}, W)\ge t(K_{a,b}, W)^2.\]
\end{lemma}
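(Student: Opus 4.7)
The plan is to realize this as a direct application of Proposition \ref{hold} with a carefully chosen base graph and attachment data. The key idea is that the three graphs $K_{a+1,b-1}$, $K_{a,b}$, and $K_{a-1,b+1}$ all share a common core $K_{a-1,b-1}$ to which we attach pendant vertices on one side or the other.

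More precisely, I would let $G = K_{a-1,b-1}$ (well-defined and nonempty since $a,b\ge 2$), with its two parts labelled $A$ (of size $a-1$) and $B$ (of size $b-1$). Define two auxiliary attached graphs: $S_1$ is a single vertex joined to every vertex of $A$, and $S_2$ is a single vertex joined to every vertex of $B$. Then $G_{\{S_1,S_2\}}(k_1,k_2)$ is obtained from $G$ by adding $k_1$ new vertices to the $B$-side and $k_2$ new vertices to the $A$-side, which produces exactly $K_{(a-1)+k_2,\,(b-1)+k_1}$. In particular, $G_{\{S_i\}}(0,2) = K_{a+1,b-1}$, $G_{\{S_i\}}(2,0) = K_{a-1,b+1}$, and $G_{\{S_i\}}(1,1) = K_{a,b}$.

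Now apply Proposition \ref{hold} with $\mathbf{x}=(0,2)$, $\mathbf{y}=(2,0)$, $\mathbf{z}=(1,1)$ and coefficients $a=b=1$ (in the Proposition's notation), so that $\mathbf{x}+\mathbf{y} = (2,2) = 2\mathbf{z}$. This immediately yields
\[t(K_{a+1,b-1},W)\,t(K_{a-1,b+1},W)\ge t(K_{a,b},W)^{2},\]
as desired.

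There is essentially no main obstacle here; the only subtlety is spotting the right choice of attachment. The key observation is that while $K_{a+1,b-1}$ and $K_{a-1,b+1}$ look structurally different, they agree on the $K_{a-1,b-1}$ core and differ only in how many extra pendant vertices are attached to each side, which is exactly the kind of one-parameter family Proposition \ref{hold} was designed to compare. All hypotheses of Proposition \ref{hold} are satisfied trivially, so no additional care is needed beyond verifying that $a\ge 2$ and $b\ge 2$ make $G = K_{a-1,b-1}$ a legitimate base graph.
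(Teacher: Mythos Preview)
Your setup is exactly right, but there is a genuine gap in the identification of the middle graph. In the construction $G_{\{S_1,S_2\}}(k_1,k_2)$, each copy of $S_1$ is attached only to the vertices of the base graph $G=K_{a-1,b-1}$, and likewise for $S_2$; copies of $S_1$ are never joined to copies of $S_2$. So when $(k_1,k_2)=(1,1)$, the single new $B$-side vertex (from $S_1$) is adjacent only to the old $A$, and the single new $A$-side vertex (from $S_2$) is adjacent only to the old $B$. These two new vertices are \emph{not} adjacent to each other. Hence $G_{\{S_i\}}(1,1)$ is $K_{a,b}$ with one edge deleted, call it $K'_{a,b}$, not $K_{a,b}$ itself. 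Your application of Proposition~\ref{hold} therefore only yields
\[
t(K_{a+1,b-1},W)\,t(K_{a-1,b+1},W)\ \ge\ t(K'_{a,b},W)^{2},
\]
which is not yet the desired statement.

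The fix is immediate and is precisely what the paper does: since $K_{a,b}$ is a subgraph of $K'_{a,b}$ on the same vertex set, one has $t(K'_{a,b},W)\ge t(K_{a,b},W)$, and chaining this with the display above gives the lemma. So your approach is the same as the paper's, but you need this extra subgraph step; without it the argument is incomplete.
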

\begin{proof}
Let $K'_{a,b}$ denote the graph obtained by removing an edge from $K_{a,b}$. Then, as a consequence of Proposition \ref{hold}, we have
\[t(K_{a+1,b-1}, W)t(K_{a-1, b+1}, W)\ge t(K'_{a,b}, W)^2\ge t(K_{a,b},W)^2\]
as desired.
\end{proof}

For this next lemma, we first define $t(K_{n, x},W)$ where $n$ is a positive integer, $W$ is a graph, and $x$ is a nonnegative real. Namely,
\[t(K_{n,x},W):=\frac{1}{|V(W)|^{n+x}}\sum_{v_1,\ldots, v_n\in V(W)} d_{\{v_1,\ldots, v_n\}}^x.\]
This matches up with the usual definition when $x$ is a nonnegative integer. This allows us to define $\rho(G,H)$ for some of $G,H$ equal to such a $K_{n,x}$.

\begin{lemma}\label{entropy}
Suppose that $a,b, c$ be positive integers with $c>a$. Then
\[\rho(K_{c, \frac{bc}{a}}, K_{a,b})=\frac{c}{a}.\]
\end{lemma}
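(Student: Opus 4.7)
The plan is to prove matching lower and upper bounds on $\rho(K_{c,bc/a}, K_{a,b})$: the lower bound by extending the construction of Proposition~\ref{lowb}(b) to the fractional objects $K_{n,x}$, and the upper bound via a weighted form of Shearer's entropy lemma (equivalently, Finner's integral inequality) applied to the functional representation of the graphon density.

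For the lower bound, I would test against the graphon $W(x,y) = \mathbf{1}[x < 1/2 \text{ and } y < 1/2]$ appearing in Proposition~\ref{lowb}(b). Define the common-neighborhood functional $f(x_{1:n}) = \int_{[0,1]} \prod_{i=1}^n W(x_i, y)\, dy$, so that $t(K_{n, x}, W) = \int_{[0,1]^n} f(x_{1:n})^x\, dx_{1:n}$. For this $W$, the functional $f(x_{1:n})$ equals $1/2$ when every $x_i < 1/2$ and vanishes otherwise. Direct integration then yields $t(K_{c, bc/a}, W) = 2^{-c - bc/a}$ and $t(K_{a, b}, W) = 2^{-a-b}$, which give the log-ratio $(a+b)/(c+bc/a)$ and hence the claimed value.

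For the upper bound, the key observation is that $f(x_{1:c}) \le f(x_S)$ for every $a$-subset $S \subseteq [c]$, since each additional factor $W(x_i, y) \le 1$ can only shrink the integrand. I would then assign the uniform weight $\lambda_S = 1/\binom{c-1}{a-1}$ to each of the $\binom{c}{a}$ $a$-subsets $S$. This makes every coordinate covered with total weight one, $\sum_{S \ni i} \lambda_S = 1$, and produces total weight $\sum_S \lambda_S = c/a$. Using $f(x_{1:c}) \le f(x_S)$ pointwise and raising to positive powers yields
\[
f(x_{1:c})^{bc/a} \;=\; \prod_S f(x_{1:c})^{b \lambda_S} \;\le\; \prod_S f(x_S)^{b \lambda_S}.
\]
Integrating over $x_{1:c}$ and invoking Finner's inequality (the integral/weighted form of Shearer's entropy lemma, which is valid precisely under the covering identity $\sum_{S \ni i} \lambda_S = 1$) then gives
\[
t(K_{c, bc/a}, W) \;\le\; \int_{[0,1]^c} \prod_S f(x_S)^{b \lambda_S}\, dx_{1:c} \;\le\; \prod_S \Bigl(\int f(x_S)^b\, dx_S\Bigr)^{\lambda_S} \;=\; t(K_{a, b}, W)^{c/a},
\]
which combined with the lower bound supplies the matching estimate.

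The main technical difficulty I expect lies in the non-divisible case $a \nmid c$, where $c/a$ is not an integer and one cannot simply partition $[c]$ into disjoint $a$-blocks and chain ordinary H\"older inequalities; the weighted Finner/Shearer step with fractional $\lambda_S$ is precisely what handles this uniformly. The key bookkeeping is verifying the covering identity $\sum_{S \ni i} \lambda_S = 1$ together with $\sum_S \lambda_S = c/a$, so that the exponents on the right-hand side multiply correctly to $c/a$ and no scale-dependent $|V(W)|$ factor survives after normalization.
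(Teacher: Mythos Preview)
Your proposal is correct and follows essentially the same route as the paper: a looped-clique construction for the lower bound (the paper uses a single looped vertex among $n$ otherwise isolated ones, which is the discrete analogue of your indicator graphon and yields the same log-ratio), together with a weighted Shearer/Finner step for the upper bound, the only cosmetic difference being that the paper covers $[c]$ by the $c$ cyclic blocks $\{i,\ldots,i+a-1\}\pmod c$ with weight $1/a$ each rather than by all $\binom{c}{a}$ $a$-subsets with uniform weight. Note that both your computation and the paper's actually pin down $\rho=a/c$ (your log-ratio $(a+b)/(c+bc/a)$ simplifies to $a/c$, and the displayed inequality $t(K_{c,bc/a},W)\le t(K_{a,b},W)^{c/a}$ is equivalent to $\rho\le a/c$); the $c/a$ in the statement is a typo, consistent with how the lemma is invoked downstream.
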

\begin{proof}
Taking $W$ to be a single looped vertex in a large, otherwise empty graph yields the required lower bound. 

For the upper bound, we make use of a discrete weighted version of Shearer's entropy lemma, found in \cite{friedgut2004hypergraphs}. Using this, and taking indices modulo $c$ we may calculate:
\begin{align*}
t(K_{c, \frac{bc}{a}}, W)&= \frac{1}{|V(W)|^{c+\frac{cb}{a}}}\sum_{v_1,\ldots, v_c\in V(W)} d_{\{v_1,\ldots, v_c\}}^{\frac{cb}{a}}.
\\ &\le \frac{1}{|V(W)|^{c+\frac{cb}{a}}}\sum_{v_1,\ldots, v_c\in V(W)} \prod_{i=1}^cd_{\{v_i,\ldots, v_{i+a-1}\}}^{\frac{b}{a}}.
\\ &\le (\frac{1}{|V(W)|^{a+b}}\sum_{v_1,\ldots, v_a\in V(W)} d_{\{v_1,\ldots, v_{a}\}}^b)^{\frac{c}{a}}
\\ &= t(K_{a,b}, W)^{\frac{c}{a}}
\end{align*}
as desired.
\end{proof}

Now we characterize $\rho(G,H)$ for most complete bipartite graphs $G,H$.

\begin{theorem}
Let $a_1\ge a_2, b_1\ge b_2$ be positive integers. Then
\[\rho(K_{a_1, a_2}, K_{b_1, b_2})=\begin{cases}
\frac{b_1b_2}{a_1a_2} & b_1\ge a_1, b_2\ge a_2
\\ \frac{b_2}{a_2} & b_1\le a_1, b_2\ge a_2
\\ \max\{\frac{b_2}{a_2}, \frac{b_1+b_2}{a_1+a_2}\} & b_1\le a_1, b_2\le a_2
\\ \frac{b_1+b_2}{a_1+a_2} & b_1\ge a_1, b_2\le a_2, b_1+b_2\le a_1+a_2
\\ \frac{b_1}{a_1+a_2-1} & b_1\ge a_1, b_2=1, b_1+b_2\ge a_1+a_2
\end{cases}.\]
\end{theorem}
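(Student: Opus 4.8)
## Proof Proposal

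\textbf{Overall strategy.} The plan is to treat the five regimes separately, establishing each lower bound via one of the explicit constructions of Proposition \ref{lowb} (or a small variant) and each upper bound via a chain of Hölder-type inequalities built from Proposition \ref{hold}, Lemma \ref{center}, and the bipartite and entropy lemmas (Lemma \ref{entropy}). The unifying principle is that $K_{b_1,b_2}$ is a subgraph of a blowup of $K_{a_1,a_2}$ precisely in the ``both coordinates larger'' regime, which explains why case 1 should follow almost immediately, whereas the other four regimes need the finer convexity/monotonicity tools. Throughout I would keep indices in nonincreasing order and use the shorthand $K_{n,x}$ with real second argument from the text.

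\textbf{Lower bounds.} For case 1, Proposition \ref{lowb}(a) with the constant graphon $W=p$ gives the ratio $|E(K_{b_1,b_2})|/|E(K_{a_1,a_2})| = b_1b_2/(a_1a_2)$. For the $b_2/a_2$ terms appearing in cases 2 and 3, I would use the looped-star construction of Proposition \ref{lowb}(d): since $\alpha(K_{a_1,a_2})=a_1$ when $a_1\ge a_2$, part (d) yields $\rho\ge (b_1+b_2-b_1)/(a_1+a_2-a_1) = b_2/a_2$. The $(b_1+b_2)/(a_1+a_2)$ terms in cases 3, 4 come from Proposition \ref{lowb}(b) (the ``half-square'' graphon), giving $|V(H)|/|V(G)|$. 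For case 5, the bound $b_1/(a_1+a_2-1)$ comes from Proposition \ref{lowb}(c), which applies since $K_{a_1,a_2}$ and $K_{b_1,1}=K_{1,b_1}$ are connected: the ratio is $(|V(H)|-1)/(|V(G)|-1)=(b_1+1-1)/(a_1+a_2-1)$. The max in case 3 is then forced: both constructions are available, so $\rho$ is at least the larger.

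\textbf{Upper bounds.} Case 1 ($b_1\ge a_1$, $b_2\ge a_2$): $K_{b_1,b_2}$ is a subgraph of the blowup of $K_{a_1,a_2}$ by the vector $(b_1/a_1$-ish$,\dots)$; more cleanly, iterate Proposition \ref{hold} to blow up the two sides from $(a_1,a_2)$ to $(b_1,a_2)$ and then to $(b_1,b_2)$, picking up the factor $b_1b_2/(a_1a_2)$; the subgraph monotonicity of Proposition 2.1(a) handles the non-integrality of ratios by passing through a common multiple. Case 2 ($b_1\le a_1$, $b_2\ge a_2$): here I expect to use Lemma \ref{entropy} with the substitution $(c,\tfrac{bc}{a})\leftrightarrow$ appropriate parameters — first reduce $K_{a_1,a_2}$ down to $K_{a_2,a_2}$-type object by Hölder on the larger side, then use the entropy lemma to climb from $a_2$ to $b_2$ in the exponent, getting $b_2/a_2$. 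Case 4: the target $\rho=(b_1+b_2)/(a_1+a_2)$ under $b_1+b_2\le a_1+a_2$ suggests ``spreading'' $K_{a_1,a_2}$ into a structure on $a_1+a_2$ vertices and using a path-like or vertex-counting Hölder chain analogous to the fourth-case argument in Theorem \ref{paths}; I would aim to write $t(K_{a_1,a_2},W)^{\,a_1+a_2}$ as a product interpolating to $t(K_{b_1,b_2},W)^{\,b_1+b_2}$ via Lemma \ref{center} to first symmetrize to the ``balanced'' complete bipartite graph and then Proposition \ref{hold}. Case 5 is the delicate one: with $b_2=1$, $K_{b_1,1}$ is a star $K_{1,b_1}$, so I would invoke the star machinery — Proposition \ref{starry} / the $\delta_t$ theorem — applied to $G=K_{a_1,a_2}$, whose relevant parameter ($|V(G)|-1 = a_1+a_2-1$, or $\delta_t$) produces exactly the denominator $a_1+a_2-1$; the hypothesis $b_1+b_2\ge a_1+a_2$, i.e. $b_1\ge a_1+a_2-1$, is what makes the star large enough for the bound to be tight.

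\textbf{Main obstacle.} I expect cases 3 (the $\max$) and 5 to be the crux. For case 3 the issue is that \emph{neither} construction dominates in general, so the upper bound must be proved as a genuine two-regime statement: $t(K_{b_1,b_2},W) \ge \min\{t(G,W)^{b_2/a_2}, t(G,W)^{(b_1+b_2)/(a_1+a_2)}\}$ uniformly, which likely needs a case split on whether $t(G,W)$ is large or small, or a clever single Hölder chain whose exponent automatically selects the right branch; getting the constant to match the max without slack is the hard part. For case 5, the obstacle is verifying that the star-domination bound of the $\delta_t(G)$ theorem is actually \emph{tight} here — i.e., computing $\delta_{b_1}(K_{a_1,a_2})$ exactly and checking it yields $a_1+a_2-1$ under the stated inequality $b_1+b_2\ge a_1+a_2$ — and confirming that the Hölder step down from $K_{1,a_1+a_2-1}$ to $K_{1,b_1}$ loses nothing. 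I would allocate most of the write-up effort to those two regimes and treat cases 1, 2, 4 as comparatively routine Hölder bookkeeping.
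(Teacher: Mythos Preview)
Your lower-bound plan and your treatment of cases 1, 4, 5 are essentially the paper's argument: the paper uses exactly the constructions (a), (b), (c), (d) you name, proves case 1 by two Hölder blow-ups via Proposition \ref{hold}, reduces case 4 to case 3 through Lemma \ref{center}, and settles case 5 by a direct appeal to Proposition \ref{starry} (no $\delta_t$ computation is needed --- the hypothesis $b_1+1\ge a_1+a_2$ is precisely $|V(G)|\le t+1$).

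Case 2 you are over-engineering. The paper does not use the entropy lemma here at all: once case 1 is done, $t(K_{b_1,b_2},W)\ge t(K_{b_1,a_2},W)^{b_2/a_2}$ by blowing up only the second side, and then $t(K_{b_1,a_2},W)\ge t(K_{a_1,a_2},W)$ is just subgraph monotonicity since $b_1\le a_1$. Your entropy route is vaguer and harder to make land on the exact exponent.

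The genuine gap is case 3. Your proposed ``case split on whether $t(G,W)$ is large or small'' cannot work: $\rho$ demands a single exponent $c$ with $t(H,W)\ge t(G,W)^c$ for \emph{every} $W$, so you cannot let $c$ depend on $W$. The correct split is on the \emph{parameters}: one checks that $\max\{b_2/a_2,(b_1+b_2)/(a_1+a_2)\}$ equals $b_2/a_2$ exactly when $b_2/a_2\ge b_1/a_1$, and proves a different inequality in each subregime. In the $b_2/a_2\ge b_1/a_1$ subcase the key step is Lemma \ref{entropy} (this is where it is actually used, not in case 2): it yields $t(K_{b_1,b_2},W)\ge t(K_{b_1a_2/b_2,\,a_2},W)^{b_2/a_2}$, and then $b_1a_2/b_2\le a_1$ gives the subgraph step. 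In the $b_2/a_2\le b_1/a_1$ subcase the paper writes down an explicit Hölder chain combining two applications of Lemma \ref{entropy} (with $c=a_1$ on each side) and one application of Proposition \ref{hold} to interpolate between $K_{a_1,a_1b_2/b_1}$ and $K_{a_1,a_1b_1/b_2}$; the exponent $(b_1+b_2)/(a_1+a_2)$ drops out of the bookkeeping. Without this parameter-level dichotomy and the specific role of the entropy lemma in the first branch, your sketch for case 3 does not yet contain the idea that makes the argument go through.
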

\begin{proof}
As usual, the lower bounds are all cases of Proposition \ref{lowb}. In particular, the first bound follows from (a), the second through fourth follow from (b) and (d), and the fifth follows from (c).  

We turn our attention to the upper bounds. Suppose first that $b_1\ge a_1$ and $b_2\ge a_2$. Define $K_{i, 0}=K_{0,i}$ to be the empty graph on $i$ vertices, so that $t(K_{i,0}, W)=1$ for any $t, W$. Then, using Proposition \ref{hold} twice,
\begin{align*}
t(K_{b_1,b_2}, W)^{a_1a_2}&= (t(K_{b_1,b_2},W)^a_1t(K_{0,b_2},W)^{b_1-a_1})^{a_2}
\\ &\ge t(K_{a_1, b_2},W)^{b_1a_2}
\\ &= (t(K_{a_1,b_2},W)^{a_2}t(K_{a_1,0},W)^{b_2-a_2})^{b_1}
\\ &\ge t(K_{a_1,a_2}, W)^{b_1b_2}
\end{align*}
as desired.

The second upper bound (for the case $b_1\le a_1, b_2\ge a_2$)  follows from the first as follows.
\[t(K_{b_1,b_2},W)\ge t(K_{b_1, a_2},W)^{\frac{b_2}{a_2}}\ge t(K_{a_1, a_2},W)^{\frac{b_2}{a_2}},\]
where the last line holds because $K_{b_1, a_2}$ is a subgraph of $K_{a_1, a_2}$.

For the third upper bound, we take two cases. If $\frac{b_2}{a_2}\ge \frac{b_1}{a_1}$, then the maximum is equal to $\frac{b_2}{a_2}$. In this case, we find that
\[t(K_{b_1,b_2},W)\ge t(K_{\frac{b_1a_2}{b_2}, a_2}, W)^{\frac{b_2}{a_2}}\ge t(K_{a_1, a_2}, W)^{\frac{b_2}{a_2}}\]
due to Lemma \ref{entropy}. 

On the other hand, if $\frac{b_2}{a_2}\le \frac{b_1}{a_1}$, then the maximum is $\frac{b_1+b_2}{a_1+a_2}$. In this case, we can calculate:
\begin{align*}
t(K_{b_1,b_2}, W)^{(a_1+a_2)(\frac{a_1}{b_2}-\frac{a_1}{b_1})}&=t(K_{b_1,b_2},W)^{\frac{a_1^2}{b_2}-\frac{a_1a_2}{b_1}}t(K_{b_1,b_2},W)^{\frac{a_1a_2}{b_2}-\frac{a_1^2}{b_1}}
\\ &\ge t(K_{a_1, \frac{a_1b_2}{b_1}}, W)^{\frac{a_1b_1}{b_2}-a_2}t(K_{a_1, \frac{a_1b_1}{b_2}}, W)^{a_2-\frac{a_1b_2}{b_1}}
\\ &\ge t(K_{a_1, a_2}, W)^{(b_1+b_2)(\frac{a_1}{b_2}-\frac{a_1}{b_1})}
\end{align*}
as desired. 

Now we turn our focus to the fourth upper bound. The approach here is to reduce to the previous case via Lemma \ref{center}. Indeed,
\[t(K_{b_1,b_2},W)\ge t(K_{b_1, a_1+a_2-b_1}, W)^{\frac{b_1+b_2}{a_1+a_2}}\ge t(K_{a_1, a_2}, W)^{\frac{b_1+b_2}{a_1+a_2}}.\]

Finally, the fifth upper bound follows directly from Proposition \ref{starry}.
\end{proof}

There's one case remaining. We conjecture the right answer for this missing case.
\begin{conjecture}
Let $a_1\ge a_2, b_1\ge b_2$ be positive integers with $a_2\ge b_2>1, a_1\le b_1, b_1+b_2\ge a_1+a_2$. Then
\[\rho(K_{a_1,a_2},K_{b_1,b_2})=\max\{\frac{b_1b_2}{a_1a_2}, \frac{b_1+b_2-1}{a_1+a_2-1}\}.\]
\end{conjecture}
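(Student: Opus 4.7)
The lower bound $\rho(K_{a_1,a_2},K_{b_1,b_2})\ge\max\{A,B\}$, with $A=b_1b_2/(a_1a_2)$ and $B=(b_1+b_2-1)/(a_1+a_2-1)$, follows immediately from Proposition \ref{lowb}: the constant graphon (part (a)) certifies $A$, and the two disjoint looped cliques (part (c)) certify $B$, both $K_{a_1,a_2}$ and $K_{b_1,b_2}$ being connected.

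For the upper bound, my plan is to induct on $b_1$ with $b_2,a_1,a_2$ fixed, starting from the base case $b_1+b_2=a_1+a_2$. In the base case, a short calculation gives $b_1b_2-a_1a_2=(b_1-a_1)(a_2-a_1-(b_1-a_1))\le 0$, so $A\le 1=B$ and the target reduces to $t(K_{b_1,b_2},W)\ge t(K_{a_1,a_2},W)$. This follows from Lemma \ref{center}: since $\log t(K_{a,s-a},W)$ is convex and symmetric in $a$ about $a=s/2$ (with $s=a_1+a_2=b_1+b_2$), and both $a_1$ and $b_1$ lie in $[s/2,s-1]$ with $a_1\le b_1$, monotonicity on that half-interval gives $t(K_{a_1,a_2},W)\le t(K_{b_1,b_2},W)$.

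For the inductive step $(b_1,b_2)\to(b_1+1,b_2)$, two applications of Proposition \ref{hold} combine to produce the bound $t(K_{b_1+1,b_2},W)\ge t(K_{b_1,b_2},W)^{(b_1+1)/b_1}$: the log-convexity relation $t(K_{b_1+1,b_2},W)\,t(K_{b_1-1,b_2},W)\ge t(K_{b_1,b_2},W)^2$, together with the Holder bound $t(K_{b_1-1,b_2},W)\le t(K_{b_1,b_2},W)^{(b_1-1)/b_1}$ (the case $\mathbf{x}=(b_1),\mathbf{y}=(0),\mathbf{z}=(b_1-1)$ of Proposition \ref{hold}), yields the stated bound after dividing. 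Since $A(b_1+1,b_2)/A(b_1,b_2)=(b_1+1)/b_1$ exactly, the $A$-exponent propagates tightly through the induction, settling the regime where $\max\{A,B\}=A$ persists from the base case up through $(b_1,b_2)$.

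The main obstacle is the $B$-dominated regime. Here $B(b_1+1,b_2)/B(b_1,b_2)=(b_1+b_2)/(b_1+b_2-1)$, strictly smaller than the Holder factor $(b_1+1)/b_1$ whenever $b_2>1$, so the induction overshoots the conjectured bound. Moreover, testing at the constant graphon $W=p$ shows that no universal single-vertex-addition inequality of the form $t(K_{b_1+1,b_2},W)\ge t(K_{b_1,b_2},W)^{(b_1+b_2)/(b_1+b_2-1)}$ can hold, so no local step of the above shape can match $B$ tightly. A fundamentally different technique will be needed here; two natural directions are a stability argument that extracts an approximate two-component decomposition of near-extremal $W$ (mirroring the two-looped-cliques construction that achieves $B$), or a weighted Shearer-style inequality in the spirit of Lemma \ref{entropy} keyed to total vertex count rather than to a single-side degree profile. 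I expect this $B$-dominated regime to contain the principal difficulty in resolving the conjecture.
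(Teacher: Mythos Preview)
The statement you are attempting is a \emph{conjecture} in the paper; the paper gives no proof, so there is nothing to compare against on the upper-bound side. Your lower bound via Proposition~\ref{lowb}(a) and (c) is correct and is exactly the kind of argument the paper uses elsewhere.

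Your upper-bound argument, however, does not establish what you claim even in the $A$-dominated regime. The inductive step $t(K_{b_1+1,b_2},W)\ge t(K_{b_1,b_2},W)^{(b_1+1)/b_1}$ is valid, and compounding from the base case $b_1^{(0)}=a_1+a_2-b_2$ (where your bound is $1$) yields the upper bound
\[
\rho(K_{a_1,a_2},K_{b_1,b_2})\ \le\ \frac{b_1}{a_1+a_2-b_2}.
\]
But this quantity equals $A=b_1b_2/(a_1a_2)$ only when $(a_1-b_2)(a_2-b_2)=0$, i.e.\ when $a_2=b_2$, a boundary case already covered by the paper's theorem. In general $b_1/(a_1+a_2-b_2)>A$ strictly. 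The reason the ``tight propagation'' fails is that your base case delivers the bound $B^{(0)}=1$, not $A^{(0)}$; since $A^{(0)}<1$ whenever $a_2>b_2$, you are compounding from the wrong starting value, and the slack never disappears. So your induction proves a genuine (and perhaps new) upper bound $b_1/(a_1+a_2-b_2)$, but it matches the conjectured value only on the two boundaries $a_2=b_2$ and $b_1+b_2=a_1+a_2$.

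Your diagnosis of the $B$-regime is sound: no single-vertex Holder step can track $B$, as your constant-graphon test shows. But be aware that the $A$-regime is not settled either; both halves of the conjecture remain open after your argument.
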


We can extend these results partially to the general multipartite case via the following result:
\begin{theorem}
Let $\bf{a}, \bf{b}$ be length $k$ sequences of positive integers such that $a\succ b$. Then
\[\rho(K_{\bf b}, K_{\bf a})=1.\]
Furthermore, if $||\bf a||_1=||\bf b||_1$, then this equality holds if and only if $a\succ b$.
\end{theorem}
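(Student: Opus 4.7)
The plan is to handle the two directions of the biconditional separately: the forward implication $\mathbf{a} \succ \mathbf{b} \Rightarrow \rho(K_\mathbf{b}, K_\mathbf{a}) = 1$ via a Robin Hood--style induction built on a multipartite extension of Lemma \ref{center}, and the converse by producing a separating graphon via a Muirhead-type characterization of majorization.

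For the forward direction, I would first extend Lemma \ref{center} to the multipartite setting: for any $i \neq j$ with $a_i, a_j \ge 1$ the identical proof (applying Proposition \ref{hold} to the base graph $K_\mathbf{a}$ with a single $(i,j)$-edge deleted) yields
\[t(K_{\mathbf{a} + e_i - e_j}, W)\, t(K_{\mathbf{a} - e_i + e_j}, W) \;\ge\; t(K_\mathbf{a}, W)^2\]
for every graphon $W$, where $e_\ell$ denotes the $\ell$th standard basis vector. Freezing every coordinate of $\mathbf{a}$ except $a_i, a_j$ and setting $s = a_i + a_j$, this says the integer sequence $f(m) := t(K_{(\ldots, m, \ldots, s-m, \ldots)}, W)$ is discretely log-convex in $m$; simultaneously, because $K_\mathbf{a}$ depends only on the multiset of part sizes, $f(m) = f(s-m)$. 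Together these force $f$ to be minimized at the balance point $m = s/2$ and to be nondecreasing as $m$ moves toward either extreme, so every Robin Hood transfer $(a_i, a_j) \mapsto (a_i - 1, a_j + 1)$ with $a_i > a_j$ weakly decreases $t(K_\cdot, W)$. Because $\mathbf{a} \succ \mathbf{b}$ is equivalent to $\mathbf{b}$ being reachable from $\mathbf{a}$ by finitely many such transfers (the Hardy--Littlewood--P\'olya characterization of majorization), iterating yields $t(K_\mathbf{a}, W) \ge t(K_\mathbf{b}, W)$ for every $W$, and hence $\rho(K_\mathbf{b}, K_\mathbf{a}) \le 1$. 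Combined with $\rho(K_\mathbf{b}, K_\mathbf{a}) \ge 1$ from Proposition \ref{lowb}(b) (which applies since $|V(K_\mathbf{a})| = |V(K_\mathbf{b})|$ under majorization), this gives $\rho = 1$.

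For the converse, I would argue the contrapositive: assuming $\|\mathbf{a}\|_1 = \|\mathbf{b}\|_1$ and $\mathbf{a} \not\succ \mathbf{b}$, I construct a graphon $W$ with $t(K_\mathbf{a}, W) < t(K_\mathbf{b}, W)$, which forces $\rho > 1$. Take $W_\mathbf{q}$ to be the graphon of a (fractional) complete $k$-partite graph with part measures $q_1, \ldots, q_k > 0$ summing to $1$. Since $K_\mathbf{c}$ has chromatic number exactly $k$ and the parts of $W_\mathbf{q}$ are independent sets, any homomorphism $K_\mathbf{c} \to W_\mathbf{q}$ must send each of the $k$ parts of $K_\mathbf{c}$ monochromatically to a distinct part of $W_\mathbf{q}$, and a direct computation yields
\[t(K_\mathbf{c}, W_\mathbf{q}) \;=\; \sum_{\sigma \in S_k} \prod_{i=1}^k q_{\sigma(i)}^{c_i}.\]
The problem thus reduces to exhibiting $\mathbf{q} > 0$ with $\sum_\sigma \prod q_{\sigma(i)}^{a_i} < \sum_\sigma \prod q_{\sigma(i)}^{b_i}$, which is precisely what the converse of Muirhead's inequality provides when $\mathbf{a} \not\succ \mathbf{b}$ (under the equal-sum hypothesis).

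The only nontrivial external input is the converse of Muirhead's inequality, which is classical but less widely quoted than the forward direction (see e.g.\ Marshall--Olkin--Arnold); if preferred, one could prove it directly by perturbing $\mathbf{q}$ near a boundary point where the partial-sum inequality defining majorization first fails, or invoke Lemma \ref{major} if its statement furnishes an equivalent separation. The multipartite generalization of Lemma \ref{center} and the chromatic evaluation of $t(K_\mathbf{c}, W_\mathbf{q})$ are both short checks, so all the conceptual weight rests on that converse characterization.
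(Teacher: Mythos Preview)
Your argument is correct and follows essentially the same route as the paper: the forward direction in both proofs extends Lemma~\ref{center} to the multipartite setting and chains Robin Hood transfers (the paper proves the chain decomposition as Lemma~\ref{major}, whereas you cite Hardy--Littlewood--P\'olya), and the converse in both proofs tests against a complete $k$-partite graphon. The only cosmetic difference is that the paper gives the separating graphon explicitly---$i$ parts of size $n$ and $k-i$ parts of size $1$, where $i$ is the first index at which the partial-sum condition fails---rather than packaging it as the converse of Muirhead, but this is precisely the construction your ``perturbing near a boundary point'' remark alludes to.
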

\begin{proof}
For the lower bound, take construction (b) from Proposition \ref{lowb}.

For the upper bound, we prove a lemma:
\begin{lemma}\label{major}
Suppose $\bf a,\bf b$ are length $k$ positive integer vectors with $\bf a\succ\bf b$. Then there exists a sequence $\bf a=\bf a_0, \bf a_1, \ldots, \bf a_\ell=\bf b$ such that, for each $0\le i\le \ell-1$, the vectors $\bf a_i$ and $\bf a_{i+1}$ differ in at most two coordinates, and $\bf a_i\succ \bf a_{i+1}$.  
\end{lemma}
\begin{proof}

We induct on $\sum_{i=1}^k(\sum_{j=1}^i(a_j-b_j))$. Note that due to the majorization condition, this quantity is always nonnegative. For the base case, if the summation is zero, then $\bf a=\bf b$ and the result is clear.

Otherwise, $\bf a\neq \bf b$, which means that they differ in at least two indices. Let $r$ be the lowest index for which $b_r\neq a_r$, and let $s>r$ be the greatest index for which $b_s=b_{r+1}$. Let $\bf{b'}$ be the nonnegative integer vector obtained by adding one to the $r$th index of $\bf b$ and subtracting one from the $s$th index. Note that the indices of $\bf{b'}$ are still in nonincreasing order. We claim $\bf a\succ \bf{b'}\succ \bf{b}$. The latter majorization is clear. For the former, we just need to prove the inequality $\sum_{j=1}^i (a_j-b_j)\ge 0$ for each $1\le j\le k$. We take three cases.

If $i\le r-1$ or $i\ge s$, then $\sum_{j=1}^i b_j=\sum_{j=1}^i b_j'$, and so the inequality follows from the majorization $\bf a\succ\bf b$. 

If $i=r$, then $\bf b_i'\le \bf a_i$ and $\bf b'_j=\bf b_j=\bf a_j$ for $j<i$, so the result follows. 

For $r+1\le i\le s-1$, we argue via contradiction. Suppose that the inequality does not hold. Since the summation shifts by $1$ from $\bf b$ to $\bf b'$ in this regime and $\bf a\succ \bf b$, it must be the case that $\sum_{j=1}^ia_j=\sum_{j=1}^i b_j$. Since $a_r>b_r$ and $a_j=b_j$ for $j<r$, there exists some index $\ell\in [r+1, j]$ such that $a_\ell<b_\ell$. But then $a_i\le a_\ell<b_\ell=b_i$. Hence $\sum_{j=1}^{i-1}a_j<\sum_{j=1}^{i-1} b_j$, contradicting $\bf a\succ \bf b$. Hence, $\bf a\succ \bf{b'}$ as required. 

Now $\bf{b'}$ and $\bf{b}$ differ in at most two indices, and $\sum_{i=1}^k(\sum_{j=1}^i(a_j-b'_j))<\sum_{i=1}^k(\sum_{j=1}^i(a_j-b_j))$, so by the inductive hypothesis we can find a chain from $\bf a$ to $\bf {b'}$. This completes the inductive step, so we're done. 
\end{proof}

Due to the lemma, to prove the upper bound, it suffices to check that $t(K_{\bf a},W)\ge t(K_{\bf b}, W)$ when $\bf a\succ \bf b$ and $\bf a,\bf b$ differ in only two indices. This follows directly from an application of Lemma \ref{center}, so we are done. 

For the converse, suppose that the majorization fails at index $i$; that is, $\sum_{j=1}^ia_j<\sum_{j=1}^ib_j$. Then take $W$ to be a complete $k$-partite graph with $i$ parts having size $n$ for large $n$ and the remaining $k-i$ parts having size $1$. Then $t(\bf {a}, W)$ is $\Theta(n^{\sum_{j=1}^ia_j-||\bf a||_1})$, while $t(\bf {b}, W)$ is $\Theta(n^{\sum_{j=1}^ib_j-||\bf b||_1})$, so $\rho(K_{\bf b}, K_{\bf a})\ge \frac{\log t(\bf {a}, W)}{\log t(\bf {b}, W)}>1$ for large enough $n$, as desired.

\end{proof}

Denote by $K'_{a_1, \ldots, a_n}$ the $n$-partite graph $G$ defined as follows. $G$ contains a central clique $C$, with vertices $v_1, \ldots, v_n$, and for each $i$, $G$ contains $a_i$ vertices outside of $C$ which have neighborhood $C-\{v_i\}$. 

\begin{theorem}\label{hyper}
For any nonnegative integer vector $\bf{a}=\{a_1,\ldots, a_n\}$,
\[\rho(K_n, K'_{\bf{a}})=1+||\bf{a}||_1.\]
\end{theorem}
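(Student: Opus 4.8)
The plan is to establish the two bounds separately. For the lower bound $\rho(K_n, K'_{\bf a}) \ge 1 + \|{\bf a}\|_1$, I would use a construction in the spirit of Proposition \ref{lowb}: take $W$ to be a weighted looped graph on roughly $N$ vertices in which all vertices carry a uniform small edge and loop weight $N^{-\theta}$ for a carefully chosen exponent $\theta$, together with one or a few special "hub" vertices of full weight joined to everything. The quantity $t(K_n, W)$ is governed by the number of (weighted) $n$-cliques, which is $\Theta(N^{n-n\theta\binom{n}{2}/\binom{n}{2}}) = \Theta(N^{\,c_n})$ for an explicit $c_n$, while $t(K'_{\bf a}, W)$ picks up, for each of the $\|{\bf a}\|_1$ pendant vertices, an extra factor corresponding to choosing a vertex joined appropriately to the clique. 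Choosing $\theta$ so that each pendant contributes exactly a factor matching one copy of the core clique's contribution forces $\log t(K'_{\bf a}, W)/\log t(K_n,W) \to 1 + \|{\bf a}\|_1$. The cleanest version is probably to mimic the Gowers–Janzer construction alluded to in the introduction; the bookkeeping of the exponent in $N$ is the only subtlety, and it is routine once $\theta$ is pinned down. (One should double-check that $t(K_n, W) \neq 0$ and that no degenerate term dominates.)

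For the upper bound $\rho(K_n, K'_{\bf a}) \le 1 + \|{\bf a}\|_1$, i.e. $t(K'_{\bf a}, W) \ge t(K_n, W)^{1 + \|{\bf a}\|_1}$ for every graphon $W$, the natural route is double counting via degrees of cliques together with Proposition \ref{hold} and a majorization/Hölder step. Writing $t(K'_{\bf a}, W)$ in the integral form, the pendant attached "opposite to $v_i$" contributes, after integrating out that pendant's variable, a factor $d_{\{v_1,\dots,v_n\}\setminus\{v_i\}}$ relative to the $W$-weighted clique; so
\[
t(K'_{\bf a}, W) = \frac{1}{|V(W)|^{\,n + \|{\bf a}\|_1}}\sum_{(v_1,\dots,v_n)\text{ clique}}\ \prod_{i=1}^{n} d_{\{v_1,\dots,v_n\}\setminus\{v_i\}}^{\,a_i},
\]
in the graph case, with the obvious graphon analogue. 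The target is to bound this below by $\bigl(\tfrac{1}{|V(W)|^n}\sum_{\text{clique}} 1\bigr)^{1+\|{\bf a}\|_1}$. I would first reduce to the case where all $a_i$ are equal: by Lemma \ref{center}-type moves (or directly by the majorization Lemma \ref{major} applied inside the sum) one shows the left side only decreases when the vector ${\bf a}$ is made "more balanced", so it suffices to treat ${\bf a} = (m,\dots,m)$ with $nm = \|{\bf a}\|_1$. In that symmetric case one applies Proposition \ref{hold}, or equivalently a single Hölder/Jensen step: the function $(v_1,\dots,v_n)\mapsto \prod_i d_{S\setminus\{v_i\}}$ on the set of cliques, raised to the power $m$, is handled by the tensor-power trick exactly as in the proof of Lemma \ref{entropy}, yielding $t(K'_{(m,\dots,m)}, W) \ge t(K_n, W)^{1 + nm}$.

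The step I expect to be the main obstacle is the reduction to the balanced vector — making precise the claim that replacing ${\bf a}$ by a vector it majorizes can only decrease $t(K'_{\bf a}, W)$. Unlike Lemma \ref{center}, the pendants here hang off a shared clique rather than forming a bipartite graph, so one needs a version of the inequality $d_{S\setminus\{v_i\}} d_{S\setminus\{v_j\}} \ge (\text{geometric-mean surrogate})$ valid termwise under the clique sum; the likely fix is to apply Proposition \ref{hold} with the "$S_i$" being the pendant vertices and the base graph $G$ the clique $K_n$, choosing the integer vectors $\bf x, \bf y, \bf z$ to realize a single majorization move on ${\bf a}$, and then to invoke Lemma \ref{major} to chain such moves from ${\bf a}$ down to $(m,\dots,m)$ — but $\|{\bf a}\|_1$ need not be divisible by $n$, so one must instead chain down to the maximally balanced integer vector and then finish by one more application of Hölder exactly as in Lemma \ref{entropy} with a fractional exponent. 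I would write the argument in that order: integral identity, single majorization move via Proposition \ref{hold}, chaining via Lemma \ref{major}, and a final fractional Hölder step.
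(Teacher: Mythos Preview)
Your lower-bound plan is fine in spirit: you correctly point to the Gowers--Janzer construction, which is precisely what the paper invokes (your own hub-plus-cloud sketch, however, is not that construction and would not by itself produce the exponent $1+\|\mathbf{a}\|_1$; the Gowers--Janzer graphs are genuinely subtle).

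The real gap is in the upper bound. Your plan has two stages: (i) reduce to the balanced vector $(m,\dots,m)$ via majorization moves through Proposition~\ref{hold} and Lemma~\ref{major}, and (ii) prove the balanced case ``by H\"older/Jensen, as in Lemma~\ref{entropy}''. Stage~(i) can indeed be carried out, but it is a detour: the paper handles arbitrary $\mathbf{a}$ in one stroke, and once you see the key idea the majorization layer is redundant. Stage~(ii) is where the content lies, and you have not supplied it. Lemma~\ref{entropy} runs Shearer over an overlapping cover of a $c$-element ground set by $a$-subsets; there is no such cover structure in $K'_{\mathbf a}$. And Proposition~\ref{hold} only yields \emph{upper} bounds on the middle graph $t(G_{\{S_i\}}(\mathbf z),W)$, so by itself it cannot produce the lower bound $t(K'_{(m,\dots,m)},W)\ge t(K_n,W)^{1+nm}$. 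Even the base case $\mathbf a=(1,\dots,1)$ already requires an ingredient you never name.

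That missing ingredient is a double-counting identity that brings $t(K_{n-1},W)$ into the picture. With $\mathcal R$ the set of ordered $n$-cliques in $W$, one has for each $i$
\[
\sum_{S\in\mathcal R} d_{S\setminus v_i}^{-1}
\;=\;\#\{(n{-}1)\text{-cliques that extend to an }n\text{-clique}\}
\;\le\; |V(W)|^{\,n-1}\,t(K_{n-1},W)\;\le\;|V(W)|^{\,n-1}.
\]
The paper simply multiplies your integral identity by $t(K_{n-1},W)^{\|\mathbf a\|_1}\le 1$, rewrites each such factor using the displayed bound, and then applies a single H\"older inequality across the resulting $1+\|\mathbf a\|_1$ sums over $\mathcal R$ (one carrying the exponents $a_i$, the others carrying exponent $-1$) to obtain $\bigl(\sum_{S\in\mathcal R}1\bigr)^{1+\|\mathbf a\|_1}$ directly. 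If you want to salvage your outline, you still need exactly this $d^{-1}$ double count (or the equivalent Jensen step using convexity of $x\log x$ on the degree sequence of $(n{-}1)$-cliques) to close even the balanced case; and once you have it, the majorization reduction becomes unnecessary.
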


\begin{proof}
The lower bound construction can be found in \cite{gowers}. Indeed, for each $n$, this paper gives a family of graphs on $N$ vertices for $N$ sufficiently large which have $K_n$ density $N^{-1+o(1)}$ and $K'_{\bf{a}}$ density $N^{-1-||\bf{a}||_1+o(1)}$.

We turn to the upper bound; let $W$ be any graph. Let $\mathcal{R}$ denote the family of ordered vertex subsets of size $n$ made from the vertices in $W$ which form a $K_n$.  Then
\begin{align*}
t(K'_{\bf{a}}, W)&= \frac{1}{|V(W)|^{||\bf{a}||_1+n}}\sum_{S\in \mathcal{R}\atop S=\{v_1,\ldots v_n\}} \prod_{i=1}^nd_{S\sm v_i}^{a_i}
\\ &\ge \frac{1}{|V(W)|^{||\bf{a}||_1+n}}(\sum_{S\in \mathcal{R}\atop S=\{v_1,\ldots v_n\}} \prod_{i=1}^nd_{S\sm v_i}^{a_i})t(K_{n-1}, W)^{||\bf{a}||_1}
\\ &=\frac{1}{|V(W)|^{n(||\bf{a}||_1+1)}}(\sum_{S\in \mathcal{R}\atop S=\{v_1,\ldots v_n\}} \prod_{i=1}^nd_{S\sm v_i}^{a_i})\prod_{i=1}^n(\sum_{S\in \mathcal{R}\atop S=\{v_1,\ldots v_n\}}d_{S\sm v_i}^{-1})^{a_i}
\\ &\ge (\frac{1}{|V(W)|^n}\sum_{S\in \mathcal{R}} 1)^{1+||\bf{a}||_1}
\\ &= t(K_n, W)^{1+||\bf{a}||_1}
\end{align*}
as desired. 
\end{proof}

\begin{remark}
The above theorem is also true if the graphs are replaced with $n$-uniform hypergraphs where every copy of $K_n$ induces a hyperedge. The lower bound construction in this case is much easier; just take $W$ to be a uniformly pseudorandom hypergraph. 
\end{remark}

\section{Open Problems}
Here, we summarize some of the remaining open problems, including some minimal open values of $\rho(G,H)$.
\begin{question}What is the value of $\rho(C_3,C_4)$?
\end{question}
 We know $\rho(C_3, C_4)\in [\frac{3}{2}, \frac{8}{5}].$ The lower bound is given by Proposition \ref{lowb}, construction c.  The upper bound follows from the following inequality chain:
\begin{align*}t(C_4, W)^5&=t(\squ, W)^5
\\&\ge t(\edge, W)^4t(\squ, W)^4
\\&\ge t(\edge, W)^4t(\squz, W)^4
\\&\ge t(\tri, W)^8
\\ &= t(C_3, W)^8.
\end{align*}
The first inequality is due to $C_4$ being a Sidorenko graph, while the third is an application of Proposition \ref{hold}.  

\begin{question} What is the value of $\rho(P_5,P_3)$? \end{question}
 We know $\rho(P_5,P_3)\in [\frac{7}{10}, \frac{3}{4}].$ The lower bound is the construction from \cite{kopparty2011homomorphism}. The upper bound follows from the following inequality chain:
\[t(P_3, W)^4\ge t(P_{15}, W)\ge t(P_5, W)^3.\]
The first inequality is true since $P_{15}$ contains four vertex-disjoint copies of $P_3$, while the second is an instance of Theorem \ref{paths}.

The following question is a natural one, given the nature of the results found here.
\begin{question}
For graphs $G,H$ for which $\rho(G,H)$ exists and is finite, is it always rational? Algebraic?
\end{question}

Given the existence of Proposition 2.7 and other anomalies, it seems likely that a finite number of sequences cannot cover all cases. That is, the following seems likely to hold true.
\begin{question}
Is it true that, for every finite family of graphon sequences $\mathcal{F}$, there exists some graphs $G,H$ for which $\rho(G,H)$ is finite and $\frac{\log t(H,W_n)}{\log t(G,W_n)}$ does not converge to $\rho(G,H)$ as $n\to\infty$ for any $\{W_n\}\in \mathcal{F}$? 
\end{question}

Finally, Theorem \ref{hyper} motivates the following general question. 

\begin{question}
Let $H$ be any $n$-partite $n$-uniform hypergraph, and let $H^*$ denote the $2$-graph where a pair of vertices is connected by an edge in $H^*$ if and only if they are both in a hyperedge of $H$.  Is it always the case that $\rho(K_n, H)=\rho(K_n^*, H^*)$?
\end{question}
\nocite{*}

\end{document}